\newtheorem{mydef}{Definition}
\newtheorem{lemma}{Lemma}
\newtheorem{mythm}{Theorem}
\newtheorem{mypro}{Proposition}
\newtheorem{mycon}{Conjecture}
\begin{document}
\large

\title{Quantum groups obtained from solutions to the parametrized Yang-Baxter equation}
\author{Valentin Buciumas}
\date{}
\maketitle

\thispagestyle{empty}

\tableofcontents

\renewcommand{\abstractname}{}

\begin{abstract}
\hspace{-0.59cm} $\bold{Abstract}$: In this article we use a parametrized version of the FRT construction to construct two new coquasitriangular Hopf algebras. The first one, $\widehat{SL_q(2)}$, is a quantization of the coordinate ring on affine $SL(2)$. We show that there is a duality relation between this object and the more well-known $U_q(\widehat{\mathfrak{sl}_2})$. We then build certain irreducible  comodules of this Hopf algebra and prove an irreducibility criterion for their tensor product in the spirit of Chari and Pressley. 

The second object is built from a solution of the parametrized Yang-Baxter equation with parameter group GL$(2, \mathbb{C}) \times$GL$(1,\mathbb{C})$. This solution doesn't come from any known quantum group, though it is related to both solutions coming from $U_{\pm i}(\widehat{\mathfrak{sl}_2})$ and $U_q(\widehat{\mathfrak{gl}(1|1)})$. We then study certain irreducible comodules of this newly built object.
\end{abstract}

\section{Introduction}

The idea of a quantum group, or a quasitriangular Hopf algebra, was introduced by Drinfel'd $\cite{Drinfeld}$ and Jimbo $\cite{Jimbo2}$ independently while looking for solutions to the Yang-Baxter equation. An example of quantum group is the quantized enveloping algebra of a finite or affine Lie algebra $\mathfrak{g}$ which we denote by $U_q(\mathfrak{g})$, where $q$ is a generic parameter. $U_q(\mathfrak{g})$ has the structure of a Hopf algebra and also a universal $R$-matrix, namely an invertible element $\mathcal{R}$ that lives in the completion $U_q(\mathfrak{g}) \widehat{\otimes} U_q(\mathfrak{g})$. The $R$-matrix satisfies some interesting properties that make the category of finite dimensional $U_q(\mathfrak{g})$ modules into a braided category. One of the properties of the $R$-matrix is that it satisfies the Yang-Baxter equation (YBE): $\mathcal{R}_{12} \mathcal{R}_{13} \mathcal{R}_{23} = \mathcal{R}_{23} \mathcal{R}_{13} \mathcal{R}_{12}$ where $\mathcal{R}_{12} = \mathcal{R} \otimes 1$, $\mathcal{R}_{23} = 1 \otimes \mathcal{R}$ and $\mathcal{R}_{13} = (\text{id} \otimes \tau)(\mathcal{R} \otimes 1)$ all live in the completion of $U_q(\mathfrak{g})^{\otimes 3}$.

If $\mathfrak{g}$ is a finite simple Lie algebra, then for every $V_1$, $V_2$ finite dimensional representations  of $U_q(\mathfrak{g})$, $\mathcal{R}$ will give rise to a matrix $R \in \text{End} (V_1 \otimes V_2)$ that will satisfy the YBE for matrices, namely:

\begin{equation*}
R_{12} R_{13} R_{23} = R_{23} R_{13} R_{12}
\end{equation*}
seen as an identity in $\text{End}(V_1 \otimes V_2 \otimes V_3)$. For example, if we work with $U_q(\mathfrak{sl}_2)$, and $V_1 = V_2 =V$ is the standard two dimensional representation, then the matrix $R$ will have the following formula (we write $R_q$ to highlight the dependency on $q$):

\begin{equation*}
R_q=\left ( \begin{array}{cccc}
q & 0 & 0 & 0 \\
0 & 1 & 0 & 0 \\
0 & q-q^{-1} & 1 & 0 \\
0 & 0 & 0 & q \end{array} \right )
\end{equation*}

If $\widehat{\mathfrak{g}}$ is an untwisted affine Lie algebra, $U_q(\widehat{\mathfrak{g}})$ is again a quasitriangular Hopf algebra. $\mathcal{R}$ will now give rise to solutions of the parametrized YBE with parameter group $G = \mathbb{C}^*$, namely matrices $R(x)$ for all $x \in G$ that satisfy the identity 
\begin{equation*}
R_{12}(x) R_{13}(x y) R_{23}(y) = R_{23}(y) R_{13}(xy) R_{12}(x)
\end{equation*}
for any $x, y \in G$ . For example, in the $\widehat{\mathfrak{sl}_2}$ case, Jimbo discovered the existence of a quantum evaluation operator $ev_a : U_q(\widehat{\mathfrak{sl}_2}) \to U_q(\mathfrak{sl}_2)$ for all $a \in \mathbb{C}^*$. The pullback by $ev_a$ of any representation $V$ of $U_q(\mathfrak{sl}_2)$ will give rise to a finite dimensional representation $V_a$, of the same dimension as $V$, of $U_q(\widehat{\mathfrak{sl}_2})$. If $V$ is the standard representation of $U_q(\mathfrak{sl}_2)$, we get a series of representations $V_a$ for all $a \in \mathbb{C}^*$. $\mathcal{R}$ will act on the tensor product $V_a \otimes V_b$ as follows:

\begin{equation} \label{eq:affine_YBE_solution}
R_q(x)=\left ( \begin{array}{cccc}
q-xq^{-1} & 0 & 0 & 0 \\
0 & 1-x & x(q-q^{-1}) & 0 \\
0 & q-q^{-1} & 1-x & 0 \\
0 & 0 & 0 & q-xq^{-1} \end{array} \right ), \qquad x=\frac{a}{b}
\end{equation}

Solutions to the parametrized YBE were instrumental in understanding the theory of certain lattice models. They were used to compute partition functions of such systems. The partition function allows one to understand the global behavior of the system by looking at its local properties. The 6-vertex model is one such example. Each state of the system is modeled by labeling the edges of a finite two dimensional rectangular lattice by $\pm$ signs. Each vertex will then be assigned a Boltzmann weight which depends on the labeling of the edges connected to the vertex. The product of all the Boltzmann weights of vertices in a given state will produce the Boltzmann weight of the state, while summing over all the Boltzmann weights of possible states of the system will result in the partition function. The partition function, the object that best describes the system, is the thing physicists are really interested in.

Transfer matrices encode information about rows in such a model. Baxter $\cite{Baxter}$ showed that solutions of the parametrized YBE are needed in order to prove that transfer matrices commute. This allowed him to compute the partition function of the six vertex model. In the field-free case of the 6-vertex model, one uses ``almost'' the solution $R_q(x)$ corresponding to the standard finite dimensional evaluation representation of $U_q(\widehat{\mathfrak{sl}_2})$. However, the relation between $U_q (\widehat{\mathfrak{sl}_2})$ and the 6-vertex model is deeper than this. For example, it was showed that the one point function for the 6-vertex model can be expressed as the quotient of the string function by the character of the basic representation of $U_q(\widehat{\mathfrak{sl}_2})$ (see $\cite{HongKang}$ for more details).

Even though the motivation for constructing quantum groups was to find solutions of the YBE, one can ask the following question: starting with just a solution of the Yang-Baxter equation, can you build a quantum group out of it? For example Jimbo $\cite{Jimbo1}$ wrote down the solutions to the parametrized YBE corresponding to quantum affine algebras before the universal $R$-matrix was constructed. The answer is close to yes. It is based on the Faddeev-Reshetikhin-Takhtajan (FRT) construction $\cite{Faddeev}$ which creates a coquasitriangular bialgebra, an object which is in duality with a quasitriangular bialgebra, also known as a quantum group.  

The FRT construction can be understood in terms of the reconstruction theorem for braided categories. The most basic reconstruction theorem, also known as a Tannakian theorem for bialgebras, was introduced by Saavedra-Rivano in $\cite{Saavedra-Rivano}$ and takes the following form. Let $k$ be a field, and let $\mathcal{C}$ be a monoidal category which is abelian and essentially small. If $\omega : \mathcal{C} \to  \text{Vect}_k$ is a monoidal functor which is exact and faithful, then there exists a coalgebra $A$ such that $\omega$ factors through an equivalence of categories $\mathcal{C} \to \text{Comod}_A$ between $\mathcal{C}$ and the category of $A$ comodules. Using the monoidal structure on $\mathcal{C}$, it was shown that $A$ is a bialgebra. For this construction, Ulbrich $\cite{Ulbrich}$ showed that if $\mathcal{C}$ is rigid, then $A$ will be a Hopf algebra. Majid $\cite{Majid}$ then proved that if $\mathcal{C}$ is a braided, not necessarily rigid category, then A becomes a coquasitriangular bialgebra, while Pfeiffer $\cite{Pfeiffer}$ proved a similar theorem for modular categories. 

We now briefly explain the FRT construction. Let $V$ be a vector space and $R \in \text{End}(V \otimes V)$ an invertible solution of the Yang-Baxter equation. FRT construct a bialgebra $A_R$ such that $V$ is an $A_R$ comodule and $\tau R : V \otimes V \to V \otimes V$ is an $A_R$ homomorphism. Their construction can be understood as follows: $A_R$ is the coalgebra obtained by using the reconstruction theorem for the braided monoidal category $\mathcal{C}$ generated by $V$ whose braiding map is given by $\tau R : V \otimes V \to V \otimes V$. The braiding ensures that the bialgebra $A_R$ is coquasitriangular. If one slightly modifies the category, then $A_R$ will become a coquasitriangular Hopf algebra. If we start with $R_q$ to be the $U_q(\mathfrak{sl}_2)$ solution to the YBE in the standard representation, we obtain $SL_q(2)$, a quantization of the coordinate ring of $SL(2, \mathbb{C})$. There is a duality relation between $U_q(\mathfrak{sl}_2)$ and $SL_q(2)$ which is to be expected.

We now present the main results in this paper. In the first part we use a parametrized version of the FRT construction with the solution $R_q(x)$ corresponding to the $R$-matrix of the quantum group $U_q(\widehat{\mathfrak{sl}_2})$ and construct a new quantum group $\widehat{SL_q(2)}$. We introduce an affine version of the quantum determinant which allows us to define an antipode, showing that $\widehat{SL_q(2)}$ is a Hopf algebra. We then prove there is a duality relation between $\widehat{SL_q(2)}$ and $U_q(\widehat{\mathfrak{sl}_2})$. We show that $\widehat{SL_q(2)}$ has a set of irreducible finite dimensional comodules that are related to the evaluation modules of $U_q(\widehat{\mathfrak{sl}_2})$ via the duality relation and satisfy similar properties to the evaluation modules discovered by Chari and Pressley $\cite{ChariPressley1}$ . Finally we discuss the construction of $\widehat{SL_q(n)}$ and what happens in other types.  

In the second part we build a quantum group from a solution of the parametrized YBE with non-commutative parameter group. Korepin $\cite{Korepin}$ and Bump, Brubaker and Friedberg $\cite{Bump2}$ independently discovered a solution to the parametrized YBE with non-commutative parameter group $\Gamma := SL(2, \mathbb{C}) \times GL(1, \mathbb{C})$ that does not correspond to any known quantum group. This solution is related to the six-vertex model, it is an expansion at $q=\pm i$ of the solution $R_q(x)$ defined in equation $\ref{eq:affine_YBE_solution}$. It is also an expansion of the Perk-Schultz solution of the YBE $\cite{PerkSchultz}$ which can be obtained from the $R$-matrix of the quantum super group $U_q(\widehat{\mathfrak{gl}(1|1)})$ in the standard representation $\cite{Kojima}$. It should be of interest to physicists since it is the center of the disordered regime of the six-vertex model and is contained in the free fermionic eight-vertex model of Fan and Wu $\cite{FanWu1}$, $\cite{FanWu2}$.

We use the reconstruction theorem to associate a coquasitriangular bialgebra $\mathcal{A}_{ff}$ to this solution of the parametrized YBE that has standard two dimensional comodules $V_x$ for all $x \in \Gamma$. We find a new set of two dimensional corepresentations. We give conditions for when the tensor product of finitely many standard comodules is irreducible and classify the subcomodules of $V_x \otimes V_y$. Finally, we give a conjecture regarding the dimension of any finite dimensional comodule and we talk about a dual construction.   

$\bold{Acknowledgements}$. I would like to thank my supervisor Daniel Bump for continued guidance and support. This work was partly supported by the NSF grant DMS-1001099. 

\section{Preliminary notions}

\subsection{Quasitriangular Hopf algebras}

In this subsection we give basic definitions from the theory of quantum groups. Most of these definitions can be found in standard texts, for example $\cite{ChariPressley}$. 

All vector spaces will be over a field $k$ of characteristic $0$. $I$ will denote the identity matrix, $I \in $End$(V)$, and $\tau$ will denote the flip, $\tau (v_i \otimes v_j) = v_j \otimes v_i$.

Given a vector space $V$, we say that $R \in \textnormal{End}(V \otimes V)$ is a solution to the parametrized YBE if the following equation holds:
\begin{equation}
R_{12} R _{13} R_{23}  = R_{23} R_{13}  R_{12}
\end{equation}
seen as an identity in End($V \otimes V \otimes V$), where $R_{12} = R \otimes I$, $R_{23} = I \otimes R$ and $R_{13} = (I \otimes\tau) (R \otimes I) (I \otimes \tau )$.

Given a group $\Gamma$ and a vector space $V$, we say that $R: \Gamma \to \textnormal{End}(V \otimes V)$ is a solution to the parametrized YBE if the following equation holds for all $\alpha, \beta \in \Gamma$:
\begin{equation}
R_{12}(\alpha) R _{13}(\alpha \cdot \beta) R_{23} (\beta) = R_{23}(\beta) R_{13} (\alpha \cdot \beta) R_{12}(\alpha)
\end{equation}

\begin{mydef}
A quasitriangular Hopf algebra $H$ is a Hopf algebra with an invertible element $\mathcal{R} \in H \otimes H$ that satisfies the following relations for all $h \in H$:
\begin{itemize}
\item $\Delta^{op}(h) = \mathcal{R}   \Delta(h)    \mathcal{R}^{-1}$ 
\item $(\Delta \otimes 1) (\mathcal{R}) = \mathcal{R}_{13} \mathcal{R}_{23}$
\item $(1 \otimes \Delta) (\mathcal{R}) = \mathcal{R}_{13} \mathcal{R}_{12}$
\end{itemize}
where $ \mathcal{R}_{12} = \mathcal{R}\otimes 1$, etc.
\end{mydef}

Given a quasitriangular Hopf algebra $H$ with a module $V$, if $R$ is the action of $\mathcal{R}$ on $V \otimes V$, then $R$ will satisfy the YBE.

The notion of a coquasitriangular Hopf algebra was introduced by Majid in $\cite{Majid}$. It is dual to the notion of a quasitriangulr Hopf algebra. 

\begin{mydef} \label{def:dualqtha}
A coquasitriangular Hopf algebra is a Hopf algebra A with a linear map $\mathcal{R} :A \otimes A \to k$ such that for every $a, b, c \in A$:
\begin{equation} 
\begin{split} 
a_{(1)} b_{(1)} \mathcal{R}(b_{(2)} \otimes a_{(2)}) =  \mathcal{R} (b_{(1)} \otimes a_{(1)}) b_{(2)} a_{(2)} \\
\mathcal{R} (ab \otimes c) = \mathcal{R} (a \otimes c_{(1)}) \mathcal{R}(b \otimes c_{(2)})   \\
\mathcal{R}(a \otimes bc) = \mathcal{R}(a_{(1)}\otimes b) \mathcal{R}(a_{(2)}\otimes c)
\end{split}
\end{equation}
$\mathcal{R}$ also has to be convolution-invertible, which means that there is $\mathcal{R}^{-1}: A \otimes A \to k$ such that $\mathcal{R}(a_{(1)} \otimes b_{(1)}) \mathcal{R}^{-1}(a_{(2)} \otimes b_{(2)}) = \epsilon(ab)$.
\end{mydef}

The category of $A$ comodules becomes braided if we set $\Psi _{V_1, V_2} = (\mathcal{R} \otimes \text{id}) (\text{id} \otimes \tau \otimes \text{id}) (\alpha_1 \otimes \alpha_2) \tau: V_1 \otimes V_2 \to V_2 \otimes V_1$ where $\alpha_1$ and $\alpha_2$ are the coaction maps for the comodules $V_1$ and $V_2$.

\begin{mydef} \label{def:duality}
A duality relation relation between two Hopf algebras $H$ and $A$ is a linear map $\braket{,}: H \otimes A \to k$ that satisfies
\begin{itemize}
\item $\braket{uv, x} = \braket{u, x_{(1)}} \braket{v, x_{(2)}}$,
\item $\braket{u, xy} = \braket{u_{(1)}, x} \braket{u_{(2)}, y}$,
\item $\braket{u, 1} = \epsilon(u)$,
\item $\braket{1, x} = \epsilon(x)$,
\item $\braket{S(u), x} = \braket{u, S(x)}$.
\end{itemize}
for all $u, v \in H$ and $x, y \in A$. 

\end{mydef}

The most well-known duality relation in the theory of quantum groups is between the Hopf algebras $U_q(\widehat{\mathfrak{sl}_2})$ and $\widehat{SL_q(2)}$. In this paper we will define a dual version of this duality relation.

 \subsection{$U_q(\widehat{\mathfrak{sl}_2})$}

In this section we will define and review some standard facts about $U_q(\widehat{\mathfrak{sl}_2})$. 

For $q$ a non-zero complex number and n a positive integer we define the quantum integers $[n]_q := \frac{q^n-q^{-n}}{q-q^{-1}}$. Let $[n]_q! := [1]_q [2]_q ... [n]_q$ and ${n \choose m}_q := \frac{[n]_q}{[m]_q [n-m]_q}$.

$U_q(\widehat{\mathfrak{sl}_2})$ is a quasitriangular Hopf algebra generated by the elements $K_i^{\pm}, e_i, f_i$, $i \in \{0,1 \}$ subject to the following relations:

\begin{itemize}
\item $K_i K_i^{-1} = 1=K_i^{-1} K_i$ \,\,\, $K_i K_j = K_j K_i$,
\item $K_i e_j K_i^{-1} = q^{a_{ij}} e_j$ \,\,\, $K_i f_j K_i^{-1} = q^{-a_{ij}} f_j$,
\item $e_i f_j - f_j e_i = \delta_{i,j} \frac{K_i -K_i^{-1}}{q-q^{-1}}$,
\item $\sum_{r=0}^{1-a_{ij}} {1-a_{ij} \choose r}_q e_i^{1-a_{ij}-r} e_j e_i^{r} = 0$  when $i \neq j$,
\item $\sum_{r=0}^{1-a_{ij}} {1-a_{ij} \choose r}_q f_i^{1-a_{ij}-r} f_j f_i^{r} = 0$  when $i \neq j$.
\end{itemize}
where the Cartan matrix of $\widehat{\mathfrak{sl}_2}$ is $A =  \left ( \begin{array}{cc}
a_{00} & a_{01} \\
a_{10} & a_{11} \\ \end{array} \right ) = \left ( \begin{array}{cc}
2 & -2 \\
-2 & 2 \\ \end{array} \right ) $.

The comultiplication, counit and antipode structure can be defined on the generators as follows:

\begin{itemize}
\item $\Delta (K_i) = K_i \otimes K_i$,
\item $\Delta (e_i) = e_i \otimes K_i + 1\otimes e_i$, $\Delta(f_i) = f_i \otimes 1 + K_i^{-1}\otimes f_i$,
\item $\epsilon(K_i) = 1$, $\epsilon(e_i) = 0 = \epsilon(f_i)$,
\item $S(K_i) = K_i^{-1}$, $S(e_i) = -e_i K_i^{-1}$, $S(f_i) = -K_i f_i$.
\end{itemize}

Finite dimensional modules of $U_q(\widehat{\mathfrak{sl}_2})$ on which $K_1$, $K_0$ act semisimply and the product $K_1 K_0$ acts as the identity are called type $\mathbf{1}$ modules. 

For every non-negative integer $r$ and complex number $a \in \mathbb{C}^*$, there is an $r+1$ dimensional irreducible $U_q(\widehat{\mathfrak{sl}_2})$ module $V_a(r)$ with basis $\{ v_0, ..., v_r \}$. The action of the generators is given below:

\begin{equation}\label{eq:module}
\begin{split}
K_1 v_j = q^{r-2j}  v_j, \,& \,\, K_0 v_j = q^{2j-r}  v_j  \\
e_1 v_j = [r-j+1]_q v_{j-1}, \,& \,\, f_1 v_j = [j+1]_q v_{j+1} \\
e_0 v_j = q^{-1} a [j+1]_q v_{j+1}, \,& \,\, f_0 v_j = q a^{-1} [r-j+1]_q v_{j-1}
\end{split}
\end{equation}

The module above is called an evaluation module. It  can be thought of as the pullback of the standard $n+1$ dimensional representation of $U_q(\mathfrak{gl}_2)$ by an evaluation morphim $ev: U_q(\widehat{\mathfrak{sl}_2}) \to U_q(\mathfrak{gl}_2)$ discovered by Jimbo, see $\cite{ChariPressley}$ Proposition 12.2.10.

Chari and Pressley $\cite{ChariPressley1}$ studied finite dimensional type $\mathbf{1}$ modules of $U_q(\widehat{\mathfrak{sl}_2})$ when $q$ is not a root of unity. They proved the following: 

\begin{mythm}\label{chari} (Chari and Pressley)
Every finite dimensional irreducible type $\mathbf{1}$ representation of $U_q(\widehat{\mathfrak{sl}_2})$ is isomorphic to a tensor product of evaluation representations:
\[
V_{a_1} (r_1) \otimes V_{a_2}(r_2) \otimes ... \otimes V_{a_n} (r_n)
\]
\end{mythm}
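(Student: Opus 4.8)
The plan is to reduce the classification to Drinfeld's theory of pseudo-highest-weight modules and the associated Drinfeld polynomials. The Chevalley presentation in terms of $e_i,f_i,K_i$ is poorly suited to this, so I would first pass to Drinfeld's loop realization of $U_q(\widehat{\mathfrak{sl}_2})$, which replaces the generators by loop generators $x^{\pm}_k$ ($k\in\mathbb{Z}$), a mutually commuting family $\psi_{\pm k}$ playing the role of a quantum loop Cartan subalgebra, and a central element. The advantage of this change of variables is that it exposes the spectral (loop) structure that is invisible in the finite Cartan data and that ultimately records how a module is assembled from evaluation modules.

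Next I would show that every finite-dimensional irreducible type $\mathbf{1}$ module $V$ is \emph{pseudo-highest-weight}: it contains a vector $v^{+}$ annihilated by all the raising loop generators $x^{+}_k$, simultaneously diagonalizing the commuting family $\psi_{\pm k}$, and generating $V$. Existence of $v^{+}$ comes from taking a vector of extremal weight for the finite triple $(e_1,f_1,K_1)$ and checking, via the loop relations, that it is killed by the $x^{+}_k$; cyclicity is then routine. Packaging the eigenvalues of $v^{+}$ into the two series $\psi^{\pm}(u)=\sum_k \psi_{\pm k}\,u^{\pm k}$, the essential analytic point is that finite-dimensionality forces both series to be the expansions, at $0$ and at $\infty$, of a single rational function
\begin{equation*}
q^{\deg P}\,\frac{P(q^{-2}u)}{P(u)}
\end{equation*}
for a unique polynomial $P$ with $P(0)=1$, the Drinfeld polynomial of $V$; the rationality is extracted from the representation theory of the $q$-deformed $\mathfrak{sl}_2$-triples inside the loop algebra, just as one controls weights for $U_q(\mathfrak{sl}_2)$. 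This sets up a bijection between irreducibles and such polynomials $P$.

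I would then identify the Drinfeld polynomials of the building blocks. A direct computation on the highest vector $v_0$ of the evaluation module $V_a(r)$ of \eqref{eq:module} shows that its Drinfeld polynomial is a single ``$q$-string'', i.e.\ its roots form one arithmetic progression of length $r$ in the exponent of $q$ (of the shape $\prod_{j=0}^{r-1}(1-a\,q^{r-1-2j}u)$ up to the normalization fixed by conventions). Given an arbitrary irreducible $V=V(P)$, I would partition the multiset of roots of $P$ into maximal $q$-strings; these strings are then pairwise in general position, and the corresponding factorization $P=\prod_i P_{V_{a_i}(r_i)}$ expresses $P$ as a product of evaluation-module Drinfeld polynomials.

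Finally I would promote this factorization of $P$ to an isomorphism of modules. Because $\Delta$ respects the loop grading, the vector $v_0\otimes\cdots\otimes v_0$ in $V_{a_1}(r_1)\otimes\cdots\otimes V_{a_n}(r_n)$ is again pseudo-highest-weight and its eigenvalue series are the \emph{products} of those of the factors, so the submodule it generates has Drinfeld polynomial $\prod_i P_{V_{a_i}(r_i)}=P$. It therefore suffices to prove that, for strings in general position, the tensor product is irreducible; matching Drinfeld polynomials then gives $V_{a_1}(r_1)\otimes\cdots\otimes V_{a_n}(r_n)\cong V(P)\cong V$. This irreducibility is the real content and the main obstacle: one shows the tensor product is simultaneously cyclic (generated by $v_0\otimes\cdots\otimes v_0$) and co-cyclic (cogenerated by the lowest vector $v_{r_1}\otimes\cdots\otimes v_{r_n}$), a module with both properties being necessarily irreducible. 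Controlling when the highest vector generates everything, as the spectral parameters $a_i$ vary, amounts to analyzing the zeros and poles of the normalized intertwiner (the parametrized $R$-matrix) between evaluation modules, and essentially all the difficulty of the theorem is concentrated here.
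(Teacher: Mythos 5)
Your proposal is correct and amounts to the same argument the paper relies on: the paper does not prove this theorem itself but simply cites Proposition 12.2.15 of Chari--Pressley's book, and your outline (Drinfeld loop realization, pseudo-highest-weight vectors and Drinfeld polynomials, identification of evaluation modules with $q$-strings, and irreducibility of tensor products of strings in general position via the cyclic/co-cyclic argument) is precisely the proof given in that cited reference.
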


\begin{proof}
See Proposition 12.2.15 in $\cite{ChariPressley}$.
\end{proof}

In the same paper they also prove several other important facts about evaluation modules. For example they show that $V^*_{a}(n)$ is isomorphic as an $U_q(\widehat{\mathfrak{sl}_2})$ module to $V_{q^2a}(n)$ and they give conditions for when the tensor product above is $not$ irreducible.

\subsection{The FRT construction}

Given a vector space $V$, if $R$ is a solution of the Yang-Baxter equation (YBE), it was shown by Faddeev, Reshetkhin and Takhtajan in $\cite{Faddeev}$ that you can construct a coquasitriangular bialgebra that has $V$ as a comodule. The method is commonly referred to as the FRT construction.

\begin{mythm} (\textnormal{Faddeev, Reshetikhin and Takhtajan})
Let $R \in \textnormal{End}(V \otimes V)$ be a solution of the YBE. Then there exists a coquasitriangular bialgebra $A_R$ that has $V$ as a comodule. 
\end{mythm}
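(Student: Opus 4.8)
The plan is to realize $A_R$ concretely by generators and relations, following the original RTT prescription, and then to recognize the outcome as the bialgebra produced by the reconstruction theorem for the braided category generated by $V$. First I would fix a basis $v_1,\dots,v_n$ of $V$ and record the action of $R$ by
\begin{equation*}
R(v_i \otimes v_j) = \sum_{k,l} R^{kl}_{ij}\, v_k \otimes v_l .
\end{equation*}
I introduce indeterminates $t_{ij}$ ($1 \le i,j \le n$), form the free algebra $k\langle t_{ij}\rangle$, and equip it with the matrix-coalgebra structure $\Delta(t_{ij}) = \sum_k t_{ik}\otimes t_{kj}$ and $\epsilon(t_{ij}) = \delta_{ij}$, extended so as to be algebra homomorphisms. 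The defining relations are the ones that force the prospective coaction $\delta(v_i) = \sum_j v_j \otimes t_{ji}$ to make $R$ (equivalently $\tau R$) a comodule morphism $V\otimes V \to V\otimes V$; writing out $\delta\circ R = (R\otimes \mathrm{id})\circ \delta$ and comparing coefficients yields the RTT relations
\begin{equation*}
\sum_{k,l} R^{pq}_{kl}\, t_{ki} t_{lj} = \sum_{k,l} R^{kl}_{ij}\, t_{pk} t_{ql},
\end{equation*}
i.e. $R\,T_1 T_2 = T_2 T_1\, R$ in the standard shorthand. I set $A_R = k\langle t_{ij}\rangle / I_R$, where $I_R$ is the two-sided ideal generated by these relations.

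Second, I would upgrade $A_R$ to a bialgebra. Since $\Delta$ and $\epsilon$ are already algebra maps on the free algebra, the only thing to check is that $I_R$ is a coideal, i.e. $\Delta(I_R)\subseteq I_R\otimes A_R + A_R\otimes I_R$ and $\epsilon(I_R)=0$; the counit condition is immediate, and the comultiplication condition is a direct computation, obtained by applying $\Delta$ to $R\,T_1 T_2 - T_2 T_1\, R$ and invoking the relations twice. Granting this, the quotient $A_R$ is a bialgebra, the coaction $\delta$ descends to a genuine comodule structure on $V$, and $R$ is a comodule morphism by design.

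Third, I would supply the coquasitriangular form. I define $\mathcal{R}:A_R\otimes A_R \to k$ on generators by $\mathcal{R}(t_{ij}\otimes t_{kl}) = R^{ik}_{jl}$ (with $\mathcal{R}(1\otimes -)=\epsilon=\mathcal{R}(-\otimes 1)$) and extend it to all of $A_R$ using the two multiplicativity axioms of Definition \ref{def:dualqtha} as a recipe. One then checks that $\mathcal{R}$ is well defined on the quotient --- that is, it annihilates $I_R$ in each slot --- and that the first (quasi-cocommutativity) axiom holds; both reduce, after expanding, to the Yang--Baxter equation for $R$. Convolution invertibility is obtained by repeating the construction with $R^{-1}$ in place of $R$, which exists since $R$ is invertible.

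The step I expect to be the main obstacle is the third one: verifying that $\mathcal{R}$ is simultaneously well defined modulo $I_R$ and satisfies all three axioms is where the YBE is genuinely consumed, and the index bookkeeping is delicate. A cleaner but less self-contained route avoids this entirely: the hypothesis that $R$ solves the YBE is precisely the statement that $\tau R$ satisfies the hexagon identities, so the monoidal category $\mathcal{C}$ generated by $V$ with braiding $\tau R$ is braided; applying the reconstruction theorem of Saavedra-Rivano, together with Majid's refinement for the braided case, to the forgetful functor $\omega:\mathcal{C}\to \mathrm{Vect}_k$ produces a coquasitriangular bialgebra having $V$ as a comodule, which one then identifies with $A_R$.
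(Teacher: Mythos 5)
Your construction contains a genuine error at the very first step: making $R$ itself a comodule morphism and making $\tau R$ a comodule morphism are \emph{not} equivalent conditions, and they generate different ideals. With your coaction $\delta(v_i)=\sum_a v_a\otimes t_{ai}$, the condition $\delta_{V\otimes V}\circ F=(F\otimes\mathrm{id})\circ\delta_{V\otimes V}$ for a linear map $F$ with $F(v_i\otimes v_j)=\sum_{k,l}F^{kl}_{ij}v_k\otimes v_l$ reads $\sum_{k,l}F^{kl}_{ij}t_{pk}t_{ql}=\sum_{k,l}F^{pq}_{kl}t_{ki}t_{lj}$, i.e.\ $T_1T_2\,F=F\,T_1T_2$: a comodule endomorphism of $V\otimes V$ is exactly a matrix commuting with $T_1T_2$. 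Taking $F=R$, as you do, yields $[R,T_1T_2]=0$ --- and indeed this is precisely your displayed relation, in which the two $t$-factors appear in the \emph{same} order on both sides --- but this is \emph{not} the RTT relation $R\,T_1T_2=T_2T_1\,R$, where the order of the factors is reversed on one side. Your ``i.e.'' connecting the display to $R\,T_1T_2=T_2T_1\,R$ is therefore false. The RTT relation arises by taking $F=\tau R$: since $\tau T_1T_2\tau=T_2T_1$, the condition $T_1T_2(\tau R)=(\tau R)T_1T_2$ is equivalent to $R\,T_1T_2=T_2T_1\,R$. This is exactly the content of the proposition the paper records immediately after the theorem (the condition for $\tau R$ to be an $A_R$-comodule homomorphism is the RTT relation), and it is the braiding $\tau R$, not $R$, that the reconstruction picture requires to be a morphism.

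The error is not cosmetic, because your third step depends on which ideal you quotient by. The first coquasitriangularity axiom, $a_{(1)}b_{(1)}\mathcal{R}(b_{(2)}\otimes a_{(2)})=\mathcal{R}(b_{(1)}\otimes a_{(1)})b_{(2)}a_{(2)}$, evaluated on generators $a=t_{ij}$, $b=t_{kl}$, produces precisely relations of RTT type (products of generators in opposite orders on the two sides, contracted with entries of $R$). So if $I_R$ is generated by your displayed commutant relations, that axiom fails in the quotient and the form $\mathcal{R}$ you define does not give a coquasitriangular structure; if instead $I_R$ is generated by the genuine RTT relations, as your matrix shorthand asserts, then your derivation of those relations is wrong. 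The fix is straightforward: derive the relations from the requirement that $\tau R$ (equivalently, the braiding) be a comodule morphism, and match the index placement in $\mathcal{R}(t_{ij}\otimes t_{kl})$ to your coaction convention so that the first axiom reproduces RTT. With that correction, the rest of your outline --- the coideal check, $\mathcal{R}$ defined on generators and extended by the multiplicativity axioms, well-definedness and quasi-cocommutativity via the YBE, convolution inverse built from $R^{-1}$ --- is the standard FRT argument; note that the paper itself does not prove this theorem (it cites Faddeev--Reshetikhin--Takhtajan and only describes the same construction), and your closing remark about reconstruction matches its introduction.
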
 

Let $n$ be the dimension of $V$, $v_i$ a basis of $V$. $A_R$ is the unital algebra generated by elements $t_{ij}$ with $1 \leq i, j \leq n$ subject to the relations 

\begin{equation}\label{RTT}
R T_1 T_2 = T_2 T_1 R
\end{equation}
where $T$ is the $n$ by $n$ matrix with entries $t_{ij}$, $T_1 = T \otimes I$, $T_2 = I \otimes T$ and $I$ is the identity matrix.

The coalgebra structure is given by the following formulas:
\begin{equation*}
\begin{split}
\Delta (t_{ij}) = \sum_k & t_{ik} \otimes t_{kj} \\
\epsilon (t_{ij}) =& \delta_{ij}
\end{split}
\end{equation*}

$V$ becomes an $A_R$-comodule via the coaction:
\begin{equation*}
\Delta_V(v_i) = \sum_j t_{ij} \otimes v_j
\end{equation*}

One can then show the following fact:

\begin{mypro}
The condition for $\tau R \in \textnormal{End}(V \otimes V)$ to be an $A_R$-comodule homomorphism is $T_1 T_2 R = R T_2 T_1$.  
\end{mypro}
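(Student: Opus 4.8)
The plan is to reduce the assertion to a single matrix identity over the (noncommutative) algebra $A_R$ by making the comodule structure on $V\otimes V$ completely explicit and then comparing coefficients. First I would record the induced coaction on the tensor square: from $\Delta_V(v_i)=\sum_j t_{ij}\otimes v_j$ together with the bialgebra comultiplication one obtains $\Delta_{V\otimes V}(v_i\otimes v_j)=\sum_{k,l} t_{ik}t_{jl}\otimes v_k\otimes v_l$, which I would repackage as a single matrix $T_1T_2\in M_{n^2}(A_R)$ whose $(i,j),(k,l)$ entry is $t_{ik}t_{jl}$. The payoff of this bookkeeping is that the coaction on $V\otimes V$ is literally ``multiplication by $T_1T_2$''.

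Next I would establish the workhorse step: for an arbitrary $k$-linear $f\in\mathrm{End}(V\otimes V)$ represented by a scalar matrix $F$, the morphism condition $(\mathrm{id}\otimes f)\circ\Delta_{V\otimes V}=\Delta_{V\otimes V}\circ f$ is equivalent to the commutation relation $T_1T_2\,F=F\,T_1T_2$ in $M_{n^2}(A_R)$. This is pure coefficient comparison: applying both composites to $v_i\otimes v_j$ and reading off the coefficient of each $v_p\otimes v_q$ turns the two sides into $F\,(T_1T_2)$ and $(T_1T_2)\,F$ respectively, using that the scalar entries of $F$ commute with everything in $A_R$.

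It then remains to substitute $F=\tau R$ and eliminate the flip. The crucial algebraic input is that conjugation by $\tau$ interchanges the two tensor legs, i.e. $\tau(T_1T_2)\tau^{-1}=T_2T_1$ (equivalently $T_1T_2\,\tau=\tau\,T_2T_1$); this is the step where the order of the noncommuting generators gets reversed, and it is exactly what produces a $T_2T_1$ in the final relation. Feeding this into $T_1T_2(\tau R)=(\tau R)T_1T_2$ and cancelling the invertible $\tau$ collapses the condition to $T_1T_2\,R=R\,T_2T_1$, and since every step is reversible the two conditions are equivalent.

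The genuinely delicate point — and the only place an error is likely — is this bookkeeping with $\tau$ and the ordering of the $t_{ij}$: because $A_R$ is noncommutative one must distinguish $T_1T_2$ from $T_2T_1$ throughout, verify $\tau(T_1T_2)\tau^{-1}=T_2T_1$ directly from the entrywise formulas rather than by analogy with the commutative case, and keep straight that $\tau$ acts only on the $\mathrm{End}(V\otimes V)$ factor while passing transparently through the $A_R$-valued coefficients. Everything else is formal index-chasing.
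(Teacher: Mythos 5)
Your proposal is correct and is essentially the paper's own proof: the paper simply asserts that ``a short, but tedious computation'' reduces the morphism condition $\Delta_{V\otimes V}\circ(\tau R)=(1\otimes(\tau R))\circ\Delta_{V\otimes V}$ to an RTT-type relation, and your bookkeeping (coaction on $V\otimes V$ as multiplication by $T_1T_2$, morphism condition as commutation with $T_1T_2$, then conjugation by $\tau$) is exactly that computation made explicit. The one point to pin down is the issue you yourself flag: with your row-equals-input indexing (the same convention that makes the $(i,j),(k,l)$ entry of $T_1T_2$ equal to $t_{ik}t_{jl}$), matrices of composites multiply in \emph{reversed} order, so the matrix of $\tau\circ R$ is $R\tau$ rather than $\tau R$; unpacking the composite one way yields $T_1T_2R=RT_2T_1$ as in the proposition, the other way yields $RT_1T_2=T_2T_1R$ as in the paper's proof and in the defining relation $\ref{RTT}$ --- these are the same component identity written in transposed conventions, a discrepancy the paper itself exhibits between its statement and its proof, so your conclusion stands.
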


\begin{proof}
$\tau R$ is an $A_R$-comodule homomorphism if $\Delta_{V \otimes V}\circ (\tau R) = (1 \otimes (\tau R)) \circ \Delta_{V \otimes V}$. A short, but tedious computation shows that this is equivalent to $R T_1 T_2 = T_2 T_1 R$.
\end{proof}

$A_R$ has the structure of a coquasitriangular bialgebra. $\mathcal{R}: A_R \otimes A_R \to \mathbb{C}$ is given by the following formula on the generators of $A_R$:
\begin{equation*}
\mathcal{R}(t_{ij} \otimes t_{pq}) = R^{qj}_{pi}
\end{equation*}
where we use the following notational convention: $R (v_i \otimes v_j) = \sum_{k,l} R^{kl}_{ij} v_k \otimes v_l$.

We can then expand this formula to higher order terms of $A_R$ by using the second and third properties of the $R$ matrix in a coquasitriangular bialgebra.

\subsection{Corepresentations of $SL_q(n)$}

The solution to the YBE corresponding to the quantum group $U_q(\mathfrak{sl}_n)$ in the standard representation is $\cite{Jimbo1}$:
\begin{equation}
R = \sum_i q e_{ii} \otimes e_{ii} + \sum_{i \neq j} e_{ii} \otimes e_{jj} + (q - q^{-1}) \sum_{i>j} e_{ij} \otimes e_{ji}
\end{equation}

One can construct a quasitriangular bialgebra $A_R$ using this solution of the YBE. $A_R$ is a quantization of the ring of coordinate functions on $M_n(\mathbb{C})$. In $\cite{Faddeev}$, the quantum determinant is introduced. It has the following formula:
\begin{equation*} 
det_q = \sum_{\sigma \in S_n} (-q)^{l(\sigma)} t_{1 \sigma(1)} t_{2 \sigma(2)} ... t_{n \sigma(n)}
\end{equation*}
where $l(\sigma)$ is the length of the permutation $\sigma$.

$det_q$ is a central, group-like element in $A_R$. Define $SL_q(n)$ as the quotient algebra of $A_R$ mod the ideal generated by $det_q-1$.  $SL_q(n)$ is a coquasitriangular bialgebra as it is a quotient of $A_R$. It is also rigid; the antipode is given by the formula:
\begin{equation*}
S(t_{ij}) = (-q)^{i-j} \tilde t_{ji}
\end{equation*}
where $\tilde t_{ij} = \sum_{\sigma \in S_{n-1}} (-q)^{l(\sigma)}t_{1\sigma(1)}...t_{i-1 \sigma(i-1)} t_{i+1 \sigma(i+1)}... t_{n \sigma(n)} $. 

We can characterize the finite dimensional comodules of $SL_q(n)$ by defining a theory of highest weight comodules. This was done by Parshall and Wang in $\cite{ParshallWang}$. Each irreducible comodule $V$ is generated by a highest weight vector $v_+$. 

In the special case where $n=2$, $SL_q(2)$ will have one $n$ dimensional corepresentation up to isomorphism for each positive integer $n$ which we'll denote $V(n-1)$. $V(0)$ has basis $v$ and coaction $v \to 1 \otimes v$. $V(1) = V$. For $m \geq 2$, $V(m)$ will be a subcorepresentation of $V^{\otimes m}$. $V(m)^* \simeq V(m)$ for all non-negative $m$. This isomorphism can be deduced from the isomorphism in the case $m=1$ presented above. There is a duality relation between $U_q(\mathfrak{sl}_2)$ and $SL_q(2)$.

In this article we will construct $\widehat{SL_q(2)}$, the affine equivalent of $SL_q(2)$. We will that it has with respect to $U_q(\widehat{\mathfrak{sl}_2})$ many of the properties that $SL_q(2)$ has with respect to $U_q(\mathfrak{sl}_2)$. We will briefly talk about the general $n$ case as well.

\section{ $\widehat{SL_q(2)}$}

\subsection{The parametrized FRT construction}

The construction in this subsection is a parametrized version of the FRT construction $\cite{Faddeev}$ inspired by some results in $\cite{Cotta-Ramusino}$.

Let $R(x)$ be a solution of the parametrized YBE with group $\Gamma$ and vector space $W$ of dimension $n$ with basis $\{ w_i \}$. The entries of $R(x)$ will be denoted by $R^{kl}_{ij}$, they are given by the formula $R(x) w_i \otimes w_j = R_{ij}^{kl} w_k \otimes w_k$. 

We define $A_R(\Gamma)$ as the bialgebra generated by elements $\{1, t_{ij}(x) \}$ for $1 \leq i,j \leq n, \forall x \in \Gamma$ mod the ideal $\mathcal{I}_R$ generated by the elements:

\begin{equation}\label{eq:FRT}
\sum_{k,l} (R^{ab}_{kl}(y x^{-1}) t_{ik}(x) t_{jl}(y) - R^{lk}_{ij} (yx^{-1}) t_{kb}(y) t_{la}(x) )
\end{equation}
for all $i,j,a,b \in \{1,...,n \}$ and all $x, y \in \Gamma$. 

The counit and comultiplication are given by the formulas:
\begin{equation*}
\begin{split}
\Delta(t_{ij} (x)) = \sum_k & t_{ik}(x) \otimes t_{kj}(x) \\
 \epsilon(t_{ij} (x)) = &\delta_{ij}
 \end{split}
\end{equation*}

For any $x \in \Gamma$, let $W_x$ be a copy of the vector space $W$ with the same basis as above. We can endow $W_x$ with an $A_R(\Gamma)$ comodule structure as follows:
\begin{equation}\label{eq:standardcomodule}
\Delta_{W_x} (w_i) = t_{ij} (x) \otimes w_j
\end{equation}

\begin{mypro}
The map $\tau R(yx^{-1}) : W_x \otimes W_y \to W_y \otimes W_x$ is an $A_R(\Gamma)$-comodule homomorphism.  
\end{mypro}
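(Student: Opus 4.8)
The plan is to check the comodule-homomorphism condition directly on basis vectors and to observe that it collapses to the defining relations \eqref{eq:FRT} of $A_R(\Gamma)$. Writing $z = yx^{-1}$ and $f = \tau R(z)$, the assertion to prove is that $\Delta_{W_y \otimes W_x} \circ f = (\text{id} \otimes f) \circ \Delta_{W_x \otimes W_y}$. Here the coaction on a tensor product of standard comodules is obtained by multiplying the coefficient-algebra factors, so that $\Delta_{W_x \otimes W_y}(w_i \otimes w_j) = \sum_{k,l} t_{ik}(x) t_{jl}(y) \otimes (w_k \otimes w_l)$, and likewise on $W_y \otimes W_x$ with the roles of $x$ and $y$ interchanged and with $y$ attached to the first tensor slot.

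First I would expand the left-hand side. Applying $f$ gives $f(w_i \otimes w_j) = \sum_{a,b} R^{ab}_{ij}(z)\, w_b \otimes w_a$, the transposition of the two factors coming from $\tau$; applying the $W_y \otimes W_x$ coaction then yields $\sum_{a,b,p,q} R^{ab}_{ij}(z)\, t_{bp}(y) t_{aq}(x) \otimes (w_p \otimes w_q)$. Next I would expand the right-hand side: applying the $W_x \otimes W_y$ coaction first and then $(\text{id} \otimes f)$, and relabelling the output indices, gives $\sum_{k,l,p,q} R^{qp}_{kl}(z)\, t_{ik}(x) t_{jl}(y) \otimes (w_p \otimes w_q)$.

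Comparing the coefficients of $w_p \otimes w_q$ for each fixed pair $(p,q)$ then reduces the claim to the identity $\sum_{k,l} R^{qp}_{kl}(z)\, t_{ik}(x) t_{jl}(y) = \sum_{a,b} R^{ab}_{ij}(z)\, t_{bp}(y) t_{aq}(x)$ in $A_R(\Gamma)$. This is exactly the generator of the ideal $\mathcal{I}_R$ displayed in \eqref{eq:FRT}, evaluated at the parameter $z = yx^{-1}$ and with the free indices $(a,b)$ specialized to $(q,p)$ (after the harmless relabelling $k \mapsto b$, $l \mapsto a$ on the right). Since that element is zero by the definition of $A_R(\Gamma)$, the two sides agree, and $f$ is a comodule homomorphism.

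The only genuine difficulty is index bookkeeping: one must track the transposition introduced by $\tau$ and the asymmetric appearance of $x$ and $y$ in the two tensor slots carefully enough that the final equation matches \eqref{eq:FRT} itself rather than its transpose or its inverse-parameter variant. Once the indices are aligned, the verification is purely formal, requiring no property of $R$ beyond the relations already imposed in $A_R(\Gamma)$; this is the explicit form of the ``short but tedious computation'' mentioned in the preceding proposition.
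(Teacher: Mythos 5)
Your proof is correct and takes essentially the same route as the paper: the paper likewise reduces the condition $(1 \otimes \tau R(yx^{-1}))\,\Delta_{W_x \otimes W_y} = \Delta_{W_y \otimes W_x}\,\tau R(yx^{-1})$ to the vanishing of the ideal generators in equation $\ref{eq:FRT}$, merely calling this ``a short computation'' where you have written it out explicitly. Your index bookkeeping, including the specialization of the free indices $(a,b)$ to $(q,p)$ and the dummy relabelling on the right-hand side, matches the defining relation exactly, so the verification is complete.
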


\begin{proof}
Showing that $\tau R(yx^{-1})$ is a comodule homomorphism is equivalent to showing $(1 \otimes \tau R(yx^{-1})) \Delta_{W_x \otimes W_y} = \Delta_{W_y \otimes W_x} \tau R(y x^{-1})$. A short computation shows that this is equivalent to the element in $\mathcal{I}_R$ written in equation $\ref{eq:FRT}$ being $0$.

\end{proof}

At this point a remark is necessary. It is known that if $V_x, V_y$ are two dimensional evaluation modules for $U_q(\widehat{\mathfrak{sl}_2})$, then $\tau R(xy^{-1})$ is a comodule map between $V_x \otimes V_y$ and $V_y \otimes V_x$ and not $\tau R(yx^{-1})$. In our case we use $\tau R(yx^{-1})$ because we work in the dual setting; we will see that duals of comodules of the object we build will be modules of $U_q(\widehat{\mathfrak{sl}_2})$ and the functor taking one to the other is contravariant. Therefore the comodule map $\tau R(yx^{-1}): W_x \otimes W_y \to W_y \otimes W_x$ will correspond to the module map $\tau R(yx^{-1}): V_y \otimes V_x \to V_x \otimes V_y$ which is an $U_q(\widehat{\mathfrak{sl}_2})$ homomorphism.

From now on we will denote $\Delta_{W_x}$ by $\Delta$ similar to the comultiplication on $A_R(\Gamma)$. One should be able to differentiate the two from context. 

\subsection{A new Hopf algebra: $\widehat{SL_q(2)}$}

Take $R_q(x)$ to be

\begin{equation}
R_q(x)=\left ( \begin{array}{cccc}
q-xq^{-1} & 0 & 0 & 0 \\
0 & 1-x & x(q-q^{-1}) & 0 \\
0 & q-q^{-1} & 1-x & 0 \\
0 & 0 & 0 & q-xq^{-1} \end{array} \right )
\end{equation}
in the basis of $\{w_1 \otimes w_1, w_2 \otimes w_1, w_1 \otimes w_2, w_2 \otimes w_2 \}$. Note that this is (up to a factor) the action of the universal $\mathcal{R}$-matrix of $U_q(\widehat{\mathfrak{sl}_2})$ on tensor products of two dimensional evaluation modules $V_a \otimes V_b$, where $x = \frac{a}{b}$.

Let 
\[T(x) = \left ( \begin{array}{cc}
t_{11}(x)& t_{12}(x) \\
t_{21}(x) & t_{22}(x) \\ \end{array} \right ) \textnormal{for } x \in \Gamma.
\] 

By the method described above we obtain a bialgebra $A_{R_q}(\Gamma)$ generated by the elements $1, t_{11}(x), t_{12}(x), t_{21}(x), t_{22}(x)$ modulo the ideal generated by elements in equation $\ref{eq:FRT}$ for all $x, y \in \Gamma$. We write equation $\ref{eq:FRT}$ in matrix form:

\footnotesize

\begin{equation*}
\begin{split}
 \kern -10pt \left ( \begin{array}{cccc}
q-\frac{y}{x} q^{-1} &  &  &  \\
 & \kern -30pt 1- \frac{y}{x} & \frac{y}{x}(q-q^{-1}) &  \\
 & \kern -30pt q-q^{-1} & 1-\frac{y}{x} &  \\
 &  &  &  \kern -20pt q-\frac{y}{x}q^{-1} \end{array} \right )   
\left ( \begin{array}{cccc}
t_{11}(x)t_{11}(y) & t_{21}(x)t_{11}(y) & t_{11}(x)t_{21}(y) & t_{21}(x)t_{21}(y) \\
t_{12}(x)t_{11}(y) & t_{22}(x)t_{11}(y) & t_{12}(x)t_{21}(y) & t_{22}(x)t_{21}(y) \\
t_{11}(x)t_{12}(y) & t_{21}(x)t_{12}(y) & t_{11}(x)t_{22}(y) & t_{21}(x)t_{22}(y) \\
t_{12}(x)t_{12}(y) & t_{22}(x)t_{12}(y) & t_{12}(x)t_{22}(y) &  t_{22}(x)t_{22}(y) \end{array} \right ) =  \\
\left ( \begin{array}{cccc}
t_{11}(y) t_{11}(x) & t_{11}(y) t_{21}(x) & t_{21}(y) t_{11}(x) & t_{21}(y) t_{21}(x) \\
t_{11}(y) t_{12}(x) & t_{11}(y) t_{22}(x) & t_{21}(y) t_{12}(x) & t_{21}(y) t_{22}(x) \\
t_{12}(y) t_{11}(x) & t_{12}(y) t_{21}(x) & t_{22}(y) t_{11}(x) & t_{22}(y) t_{21}(x) \\
t_{12}(y) t_{12}(x) & t_{12}(y) t_{22}(x) & t_{22}(y) t_{12}(x) & t_{22}(y) t_{22}(x) \end{array} \right )   
\left ( \begin{array}{cccc}
q-\frac{y}{x} q^{-1} &  &  &  \\
 & \kern -30pt 1- \frac{y}{x} & \frac{y}{x}(q-q^{-1}) &  \\
 & \kern -30pt q-q^{-1} & 1-\frac{y}{x} &  \\
 &  &  &  \kern -20pt q-\frac{y}{x}q^{-1} \end{array} \right )   
\end{split}
\end{equation*}

\large

Notice that $R_q(q^2)$ has rank $1$ and $R_{q} (q^{-2})$ has rank $3$. Otherwise the matrix $R_{q}(x)$ is invertible. By plugging in $y=q^2x$ in the equation above and expanding, one gets the following ``commutation relations'': 

\begin{equation}\label{eq:commutationformq2hat}
\begin{split}
t_{12}(x) t_{11}(q^2x)=& q t_{11}(x)t_{12}(q^2x) \\
t_{21}(q^2x)t_{11}(x) =& q t_{11}(q^2x)t_{21}(x) \\
t_{22}(q^2 x) t_{12}(x)=&  q t_{12}(q^2x) t_{22}(x)\\
t_{22}(x) t_{21}(q^2x) =& q t_{22}(x) t_{22}(q^2x) \\
t_{22}(x)t_{11}(q^2x) -  q t_{21}(x)t_{12}(q^2x) =& t_{11}(q^2x)t_{22}(x) - q^{-1} t_{21}(q^2x)t_{12}(x)   = \\
= t_{22}(q^2x) t_{11}(x) - q  t_{12}(q^2x) t_{21}(x)  =& t_{11}(x) t_{22}(q^2x) - q^{-1} t_{12}(x) t_{21}(q^2x) := \textnormal{det}_q(qx)
\end{split}
\end{equation}

The last set of four equalities are used to define the affine version of the quantum determinant.

A short computation shows that the quantum determinant is group-like:
\begin{gather*}
\Delta (\textnormal{det}_q(qx)) = \Delta (t_{11}(x) t_{22}(q^2x) - q^{-1} t_{12}(x) t_{21}(q^2x)) = \Delta(t_{11}(x)) \Delta(t_{22}(q^2x)) -  \\
q^{-1} \Delta (t_{12}(x)) \Delta (t_{21}(q^2x))
=(t_{11}(x) \otimes t_{11}(x) + t_{12}(x) \otimes t_{21}(x))(t_{21}(q^2x) \otimes t_{12}(q^2x) + \\ t_{22}(q^2x) \otimes t_{22}(q^2x)) - q^{-1} (t_{11}(x) \otimes t_{12}(x) + t_{12}(x) \otimes t_{22}(x))(t_{21}(q^2x) \otimes t_{11}(q^2x) + \\ t_{22}(q^2x) \otimes t_{21}(q^2x)) = 
t_{11}(x)t_{21}(q^2x) \otimes t_{11}(x)t_{12}(q^2x) +  t_{11}(x)t_{22}(q^2x) \otimes t_{11}(x)t_{22}(q^2x) + \\ t_{12}(x) t_{21}(q^2x) \otimes t_{21}(x) t_{12}(q^2x) + t_{12}(x)t_{22}(q^2x) \otimes t_{21}(x) t_{22}(q^2x)  - \\ q^{-1} t_{11}(x)t_{21}(q^2x) \otimes t_{12}(x)t_{11}(q^2x) - q^{-1} t_{11}(x)t_{22}(q^2x) \otimes t_{12}(x)t_{21}(q^2x) - \\ q^{-1} t_{12}(x)t_{21}(q^2x) \otimes t_{21}(x)t_{12}(q^2x) - q^{-1} t_{12}(x)t_{22}(q^2x) \otimes t_{22}(x)t_{21}(q^2x) = \textnormal{det}_q(qx) \otimes \textnormal{det}_q(qx)
\end{gather*}

If we now quotient $A_R(\Gamma)$ by the ideal generated by the elements $\textnormal{det}_q(x)-1$ for all $x \in \Gamma$ and call it $\widehat{SL_q(2)}$, we can endow this bialgebra with an antipode:
 
\begin{equation}\label{eq:hatantipode}
\begin{split}
S(t_{11}(x)) = & t_{22}(q^{2}x)\\
S(t_{12}(x)) =& -qt_{12}(q^{2}x) \\
S(t_{21}(x)) =& -q^{-1} t_{21}(q^{2}x)\\
S(t_{22}(x)) =& t_{11}(q^{2}x)
\end{split}
\end{equation}

\begin{mythm}
$\widehat{SL_q(2)}$ is a Hopf algebra with the antipode defined above.
\end{mythm}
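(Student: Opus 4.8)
The plan is to upgrade the bialgebra $\widehat{SL_q(2)}$ to a Hopf algebra by checking, in order, three things: that the quotient really is a bialgebra, that the formulas \ref{eq:hatantipode} define a well-defined algebra anti-endomorphism, and that this map satisfies the antipode axioms. I would begin with the bialgebra structure, i.e. that the two-sided ideal generated by the elements $\det_q(x) - 1$ is a bi-ideal. Since $\epsilon(\det_q(qx)) = \epsilon(t_{11}(x))\epsilon(t_{22}(q^2x)) - q^{-1}\epsilon(t_{12}(x))\epsilon(t_{21}(q^2x)) = 1$ and $\det_q$ is group-like (the computation already carried out before the statement), we have $\Delta(\det_q(x) - 1) = (\det_q(x)-1)\otimes \det_q(x) + 1 \otimes (\det_q(x)-1)$ and $\epsilon(\det_q(x)-1) = 0$. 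Because $\Delta$ is an algebra map and the ideal is two-sided, these facts propagate from the generators to the whole ideal, so it is a coideal and the quotient inherits the bialgebra structure; note that centrality of $\det_q$ is not needed for this step.

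Next I would verify the antipode identities $\sum_k S(t_{ik}(x))\, t_{kj}(x) = \delta_{ij}$ and $\sum_k t_{ik}(x)\, S(t_{kj}(x)) = \delta_{ij}$ directly on the generators. Substituting \ref{eq:hatantipode}, each equation collapses onto a single line of \ref{eq:commutationformq2hat}. For the first family the two diagonal entries become the presentations $t_{22}(q^2x)t_{11}(x) - q\,t_{12}(q^2x)t_{21}(x)$ and $t_{11}(q^2x)t_{22}(x) - q^{-1}t_{21}(q^2x)t_{12}(x)$ of $\det_q(qx)$, which equals $1$ in the quotient, while the off-diagonal entries vanish after using $t_{22}(q^2x)t_{12}(x) = q\,t_{12}(q^2x)t_{22}(x)$ and $t_{21}(q^2x)t_{11}(x) = q\,t_{11}(q^2x)t_{21}(x)$. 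The second family is handled symmetrically, using the remaining two presentations of $\det_q(qx)$ and the other two commutation relations. This shows that, on generators, the matrix $S(T(x))$ is a two-sided inverse of $T(x)$.

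The main obstacle is well-definedness: $S$ must descend from the free algebra on the $t_{ij}(x)$ to $\widehat{SL_q(2)}$, i.e. it must carry the defining relations into $\mathcal{I}_R + \langle \det_q(x)-1\rangle$. The determinant relations are easy, since applying the anti-homomorphism $S$ to the expression $t_{11}(x)t_{22}(q^2x) - q^{-1}t_{12}(x)t_{21}(q^2x)$ and comparing with \ref{eq:commutationformq2hat} gives $S(\det_q(qx)) = \det_q(q^3x)$, which is $1$ in the quotient. The substantive point is that $S$ must send the FRT generators \ref{eq:FRT} at parameters $(x,y)$ into $\mathcal{I}_R$; I would establish this either by a direct computation with the explicit matrix $R_q(x)$ and the four presentations of the determinant, or, more cleanly, by exploiting the crossing-type inversion symmetry of $R_q(x)$ associated with the shift $x \mapsto q^2 x$ that is visible in \ref{eq:hatantipode}, so that applying $S$ to a defining relation returns a defining relation at shifted parameters. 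Once well-definedness is secured, $S$ is a genuine algebra anti-endomorphism, and the antipode identity — already checked on generators and on $1$ — extends to all of $\widehat{SL_q(2)}$ by the standard observation that, since $S$ is an anti-homomorphism and $\Delta$ a homomorphism, the set of $h$ with $m(S\otimes\mathrm{id})\Delta(h) = \epsilon(h)1 = m(\mathrm{id}\otimes S)\Delta(h)$ is closed under multiplication. This completes the proof that $\widehat{SL_q(2)}$ is a Hopf algebra with the stated antipode.
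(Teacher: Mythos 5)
Your proposal is correct and follows essentially the same route as the paper: the heart of both arguments is verifying $T(x)\,S(T(x)) = S(T(x))\,T(x) = I$ on generators, which reduces via the relations in \ref{eq:commutationformq2hat} to the four presentations of $\det_q(qx)$ and the identification $\det_q(qx)=1$ in the quotient. The extra steps you supply --- that the ideal $\langle \det_q(x)-1\rangle$ is a coideal, that $S$ descends through the defining relations (with the pleasant computation $S(\det_q(qx)) = \det_q(q^3x)$), and the multiplicative-closure argument extending the antipode identity from generators to all of $\widehat{SL_q(2)}$ --- are exactly the standard details the paper leaves implicit, so they strengthen rather than diverge from its proof.
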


\begin{proof}
In order to make sure that the formula for the antipode is correct, we just need to check the following relations:

\begin{gather*}
\left ( \begin{array}{cccc}
t_{11}(x) & t_{12}(x)  \\
t_{21}(x) & t_{22}(x)   \end{array} \right ) 
\left ( \begin{array}{cccc}
S(t_{11}(x)) & S(t_{12}(x))  \\
S(t_{21}(x)) & S(t_{22}(x))   \end{array} \right ) = I \\
\left ( \begin{array}{cccc}
S(t_{11}(x)) & S(t_{12}(x))  \\
S(t_{21}(x)) & S(t_{22}(x))   \end{array} \right ) \left ( \begin{array}{cccc}
t_{11}(x) & t_{12}(x)  \\
t_{21}(x) & t_{22}(x)   \end{array} \right ) = I
\end{gather*}

By writing down the values of the antipode according to formula  $\ref{eq:hatantipode}$ and using the fact that $\det_q(qx)=1$ we can show that $S$ is indeed the antipode for $\widehat{SL_q(2)}$. 

\end{proof}

\subsection{Duality between $U_q(\widehat{\mathfrak{sl}_2})$ and $\widehat{SL_q(2)}$ }

Let $T(x) = \left ( \begin{array}{cc}
t_{11}(x)& t_{12}(x) \\
t_{21}(x) & t_{22}(x) \\ \end{array} \right )$ be defined as above. By $\braket{x, T(x)} $ we mean $ \left ( \begin{array}{cc}
\braket{x,t_{11}(x)}& \braket{x,t_{12}(x)} \\
\braket{x,t_{21}(x)} & \braket{x,t_{22}(x)} \\ \end{array} \right )$. The following theorem relates $U_q(\widehat{\mathfrak{sl}_2})$ and $\widehat{SL_q(2)}$.

\begin{mythm} 
There is a duality relation $\braket{ , }$ between $U_q(\widehat{\mathfrak{sl}_2})$ and $\widehat{SL_q(2)}$ that is given on generators by the following formulas:
\begin{equation}\label{eq:duality}
\begin{split}
\braket{K_1,T(x)} = \left ( \begin{array}{cc}
q & 0 \\
0 & q^{-1} \\ \end{array} \right ) \,\,\,\,\,\, \braket{K_0,T(x)} = \left ( \begin{array}{cc}
q^{-1} & 0 \\
0 & q \\ \end{array} \right )\\
\braket{e_1,T(x)} = \left ( \begin{array}{cc}
0 & 1 \\
0 & 0 \\ \end{array} \right ) \,\,\,\,\,\, \braket{e_0,T(x)} = \left ( \begin{array}{cc}
0 & 0 \\
q^{-1}x & 0 \\ \end{array} \right ) \\
\braket{f_1,T(x)} = \left ( \begin{array}{cc}
0 & 0 \\
1 & 0 \\ \end{array} \right ) \,\,\,\,\,\, \braket{f_0,T(x)} = \left ( \begin{array}{cc}
0 & qx^{-1} \\
0 & 0 \\ \end{array} \right ) \\
\braket{1,T(x)} = \left ( \begin{array}{cc}
1 & 0 \\
0 & 1 \\ \end{array} \right ) \,\,\,\,\,\, \braket{a,1} = \epsilon(a), \forall a \in U_q(\widehat{\mathfrak{sl}_2})
\end{split}
\end{equation}

\end{mythm}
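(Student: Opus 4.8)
The plan is to construct the pairing in stages, exploiting that both $U_q(\widehat{\mathfrak{sl}_2})$ and $\widehat{SL_q(2)}$ are presented by generators and relations, so that the axioms of Definition~\ref{def:duality} serve a double role: they \emph{define} the extension of $\braket{\,,}$ off the generators, and they impose precisely the consistency conditions one must verify. First I would use the axiom $\braket{u,xy}=\braket{u_{(1)},x}\braket{u_{(2)},y}$ to define $\braket{g,a}$ for each generator $g\in\{K_i^{\pm},e_i,f_i\}$ and every $a\in\widehat{SL_q(2)}$: writing $a$ as a word in the $t_{ij}(x)$ and iterating the coproduct reduces everything to the values in \eqref{eq:duality}. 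This is a genuine formula because each generator of $U_q(\widehat{\mathfrak{sl}_2})$ is group-like or skew-primitive, so its iterated coproduct involves only the generators themselves, the $K_i$, and $1$, all of which are already paired. Then I would use $\braket{uv,x}=\braket{u,x_{(1)}}\braket{v,x_{(2)}}$ to extend to arbitrary products in the first slot.

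With $\braket{\,,}$ defined as a formula, the real work is to show it descends to both quotients, i.e. that it annihilates the defining relations on each side, and here organizing everything around the matrices $\rho_x(g):=\braket{g,T(x)}$ of \eqref{eq:duality} is the key simplification. For the relations of $U_q(\widehat{\mathfrak{sl}_2})$, the axiom $\braket{uv,x}=\braket{u,x_{(1)}}\braket{v,x_{(2)}}$ together with $\Delta(t_{ij}(x))=\sum_k t_{ik}(x)\otimes t_{kj}(x)$ forces $\rho_x(uv)=\rho_x(u)\rho_x(v)$; hence (after reducing to the generators $t_{ij}(x)$ of $\widehat{SL_q(2)}$) respecting the relations of $U_q(\widehat{\mathfrak{sl}_2})$ is exactly the assertion that the $\rho_x(K_i^{\pm}),\rho_x(e_i),\rho_x(f_i)$ satisfy the Serre presentation. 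This is a finite $2\times 2$ check: the Cartan and $q$-commutator relations reduce to identities among $\mathrm{diag}(q,q^{-1})$ and the elementary matrices $E_{12},E_{21}$, while the quantum Serre relations hold automatically since $\rho_x(e_i)^2=\rho_x(f_i)^2=0$ annihilates every term of the $(1-a_{ij})=3$-fold sum. In effect $\rho_x$ is the two-dimensional evaluation module of \eqref{eq:module} transported through the contravariant comodule-to-module correspondence noted earlier, which is why $x$ and transposed indices surface in the $e_0,f_0$ entries.

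On the $\widehat{SL_q(2)}$ side I must check that $\braket{g,-}$ kills the FRT ideal $\mathcal{I}_R$ of \eqref{eq:FRT} and the elements $\det_q(x)-1$. The determinant part is a short finite computation: since $\det_q(qx)$ is group-like, applying $\braket{g,ab}=\braket{g_{(1)},a}\braket{g_{(2)},b}$ to $t_{11}(x)t_{22}(q^2x)-q^{-1}t_{12}(x)t_{21}(q^2x)$ and substituting the matrix entries yields $\braket{g,\det_q(qx)}=\epsilon(g)$ for each generator, so $\det_q(x)-1$ lies in the kernel. The FRT relations \eqref{eq:FRT} are the heart of the matter, and I expect this to be the main obstacle, both because it is the only genuinely computational step and because one must align the conventions so that the twist $yx^{-1}$, the placement of $\Delta$ versus $\Delta^{op}$, and the contravariant index transposition all cohere. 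Pairing a generator $g$ against \eqref{eq:FRT} and using $\braket{g,t_{ik}(x)t_{jl}(y)}=\braket{g_{(1)},t_{ik}(x)}\braket{g_{(2)},t_{jl}(y)}$ rewrites the required vanishing as the statement that $\tau R_q(yx^{-1})\colon W_x\otimes W_y\to W_y\otimes W_x$ intertwines the induced $U_q(\widehat{\mathfrak{sl}_2})$-actions; this is the assertion dual to the Proposition that $\tau R(yx^{-1})$ is a comodule homomorphism, equivalently the quasitriangularity relation $\Delta^{op}(g)\,\mathcal{R}=\mathcal{R}\,\Delta(g)$ read off on the evaluation representations. Since $R_q(yx^{-1})$ is, up to scalar, the action of the universal $\mathcal{R}$-matrix on $V_x\otimes V_y$, this holds; the cleanest route is to verify it on the algebra generators $g$, where $\Delta(g)$ is explicit, rather than on all of $U_q(\widehat{\mathfrak{sl}_2})$.

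Finally I would dispatch the remaining axioms. The normalizations are essentially built in: $\braket{u,1}=\epsilon(u)$ is imposed in the last line of \eqref{eq:duality}, and $\braket{1,x}=\epsilon(x)$ holds on the generators $t_{ij}(x)$ since $\braket{1,T(x)}=I$ and propagates by the multiplicativity axioms. For the antipode compatibility $\braket{S(u),x}=\braket{u,S(x)}$ I would argue formally: once $\braket{\,,}$ is a bialgebra pairing, both $(u,x)\mapsto\braket{S(u),x}$ and $(u,x)\mapsto\braket{u,S(x)}$ are convolution inverses of $\braket{\,,}$ in $\mathrm{Hom}(U_q(\widehat{\mathfrak{sl}_2})\otimes\widehat{SL_q(2)},k)$, so uniqueness of the convolution inverse forces them to agree; it then suffices to confirm this on generators using the antipode formula \eqref{eq:hatantipode} and the antipode of $U_q(\widehat{\mathfrak{sl}_2})$, which is a direct check.
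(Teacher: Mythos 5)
Your proposal is correct, and its overall skeleton (define the pairing on generators, then show it annihilates the defining ideals) matches the paper's; but on the decisive step you take a genuinely different, more structured route. The paper verifies $\braket{a,t}=0$ for the FRT elements $t$ of equation \ref{eq:FRT} by brute force against the monomials $a = e_0^{i_1}e_1^{i_2}f_0^{i_3}f_1^{i_4}K_0^{i_5}K_1^{i_6}$, an open-ended case analysis of iterated coproducts of which only one sample instance is written out. You instead observe that $u\mapsto\braket{u,T_1(x)T_2(y)}$ and its mirror are multiplicative matrix-valued maps, so the set of $u$ annihilating the relations \ref{eq:FRT} is a subalgebra and it suffices to check the generators $K_i^{\pm},e_i,f_i$; you then identify that finite check with the statement that $\tau R_q(yx^{-1})$ intertwines the two induced evaluation actions, i.e.\ quasitriangularity read on $V_x\otimes V_y$ --- exactly the fact the paper records when it notes that $R_q(x)$ is, up to a factor, the action of the universal $\mathcal{R}$-matrix on evaluation modules. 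Your route also fills two points the paper leaves implicit: well-definedness in the $U_q(\widehat{\mathfrak{sl}_2})$ slot (your observation that $\rho_x(g)=\braket{g,T(x)}$ is precisely the two-dimensional evaluation representation, with the quantum Serre relations holding trivially because $\rho_x(e_i)^2=\rho_x(f_i)^2=0$), and the antipode axiom $\braket{S(u),x}=\braket{u,S(x)}$, which the paper's proof never addresses and which you obtain from uniqueness of convolution inverses. What the paper's approach buys is self-containedness: pure computation, no appeal to the existence or properties of the universal $\mathcal{R}$-matrix. What yours buys is finiteness and conceptual transparency, at the price of the convention-matching you rightly flag as the delicate point (the $yx^{-1}$ twist and the contravariant index transposition), which is precisely what the paper's remark following the parametrized FRT proposition is designed to reconcile.
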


\begin{proof} 
Since we defined the duality on generators, the relations in definition $\ref{def:duality}$ will hold. One thing that needs checking is the fact that the duality relation is well-defined, namely the fact the $\braket{a,t} = 0$  for every element $t$ of the form in equation $\ref{eq:FRT}$ and $\braket{a,\det_q (x)} = \epsilon(a)$. 

The second equality is easier. We have to prove it for $a$ a generator of $U_q(\widehat{\mathfrak{sl}_2})$ (i.e. $K_i$, $e_i$ and $f_i$) and then for products of such generators we can use the fact that $\det_q(x)$ is group-like and therefore $\braket{a_1 a_2, det_q(x) } = \braket{a_1,\det_q(x)} \braket{a_2, \det_q(x)}$. 

The first equality is significantly harder from a computational point of view. The idea is to check that $\braket{a,t}=0$ for every element $t$ described above and $a = e_0^{i_1}e_1^{i_2}f_0^{i_3}f_1^{i_4}K_0^{i_5}K_1^{i_6}$. $t$ is a product of degree two of generators of $\widehat{SL_q(2)}$, so we first find $\Delta (a)$. Several cases need to be worked out independently (for example if any of the $i_1$, $i_2$, $i_3$ and $i_4$ are greater than $2$, it follows that the bracket is $0$ due to the fact that their squares act as $0$ on the two dimensional evaluation module). 

We will compute a very simple case to try to convince the reader that this relation holds. We will show that $\braket{a,t} = 0$ for
\[
t = (q-\frac{y}{x}q^{-1})t_{21}(x) t_{11}(y) - (1-\frac{y}{x})t_{11}(y) t_{21}(x) - (q-q^{-1}) t_{21}(y) t_{11}(x).
\]
 and $a \in \{e_i, f_i, K_i \}$.
 
Because $\Delta(K_i) = K_i \otimes K_i$ and $\braket{K_i,t_{21}(x)} = 0$ it follows that  $\braket{K_i, t}=0$. $e_1$'s and $f_0$'s bracket with $t_{21}(x)$ and $t_{11}(x)$ are $0$, and since $\Delta (e_1) = e_1 \otimes K_1 + 1\otimes e_1$ and $\Delta(f_0) = f_0 \otimes 1 + K_0^{-1}\otimes f_0$ we get $0$ again. 

In the remaining cases we write the comultiplication and compute the bracket: $\Delta (e_0) = e_0 \otimes K_0 + 1\otimes e_0, \Delta(f_1) = f_1 \otimes 1 + K_1^{-1}\otimes f_1$.

\[
\braket{e_0, t} = q^{-1} (  (q-\frac{y}{x}q^{-1})(x q^{-1}) - (1-\frac{y}{x})y - (q-q^{-1})(y q^{-1}) )= 0
\]
 \[
\braket{f_1, t} = (q-\frac{y}{x}q^{-1}) - (1-\frac{y}{x})q^{-1} - (q-q^{-1}) = 0 
 \]

\end{proof}

 \subsection{Evaluation comodules of $\widehat{SL_q(2)}$}

Let $W$ be a finite dimensional comodule of $\widehat{SL_q(2)}$, $w \in W$. We denote the coaction by $w \to w_{(0)} \otimes w_{(1)} $. One can show that the dual of $W$, which we'll denote $\bar W = V$ is now a module of $U_q(\widehat{\mathfrak{sl}_2})$. Let $x \in U_q(\widehat{\mathfrak{sl}_2}), \bar w \in \bar W$. The action will be given by 

\begin{equation}\label{eq:action}
x \cdot \bar w (w) = \braket{x, w_{(0)}} \bar w (w_{(1)})
\end{equation}

The coevaluation $\Delta$ is a map from $W$ to $\widehat{SL_q(2)} \otimes W$. Given a basis $\{w_i \}$ of $W$, define $\alpha_{jl} \in \widehat{SL_q(2)}$ such that $\Delta (w_j) = \alpha_{jl} \otimes w_l$. Using equation $\ref{eq:action}$ we can now write the action of $x$ on $V$ as follows:
\[
x \cdot \bar w_j = \braket{x, \alpha_{lj}} \bar w_l
\]

We know that $U_q(\widehat{\mathfrak{sl}_2})$ has an evaluation module $V_a(r)$ of dimension $r+1$ for every $a \in \mathbb{C}^*$ and $r$ non-negative integer (note that all one dimensional modules are in fact the same regardless of what $a$ is). See equation $\ref{eq:module}$ for the action of the generators of $U_q(\widehat{\mathfrak{sl}_2})$ on $V_a(r)$. We will now build evaluation comodules $W_a(r)$ for $\widehat{SL_q(2)}$.

If $r=0$, then $W(0)$ is the one dimensional comodule with the coaction $v \to 1 \otimes v$. If $r=1$, $W_{a}(1)$ is the comodule $W_a$ defined in equation $\ref{eq:standardcomodule}$.

A basis of $W_{a_1} \otimes W_{a_2} \otimes ... \otimes W_{a_n}$ is given by $w_{i_1} \otimes w_{i_2} \otimes ... \otimes w_{i_n} := w_{i_1,i_2,...,i_n}$, where $i_j \in \{1,2\}$.

$W_{a}(r)$ will be the subcomodule of $W_{q^{-r+1}a} \otimes W_{q^{-r+3}a} \otimes ... \otimes W_{q^{r-1}a}$ generated by the ``highest weight vector'' $w_{1,1,..,1}$. It will have basis $\{u_{j} \}, 0 \leq j \leq r$ given by the following formula:
\[
u_{j} = \sum_{i_k \in \{1,2 \}, \sum_{k} i_k = r+j} g(i_1,i_2,...,i_r) w_{i_1,i_2,...,i_r}
\]
where $g(i_1,i_2,...,i_r)$ is $q^p$ with $p$ being the sum over all $i_{m}=2$ of the number of $i_k =1$ that are to the right of that $i_{m}=2$ in the sequence $\{ i_1, i_2,  ... , i_n \}$. 

For example $u_0 = w_{1,1,...,1}$ and $u_{1} = w_{1,1,..,2}+qw_{1,...,2,1} +...+ q^{r-1} w_{2,1,...,1}$. 

The comodule structure on $W_{a}$ is given by $ \Delta (u_i) = \sum_j \alpha_{ij} \otimes u_j$, where 

\begin{equation}\label{eq:evaluationcomodule}
\alpha_{ij} = \sum_{i_k \in \{1,2 \}, \sum_{k} i_k = r+i} g(i_1,i_2,...,i_r) t_{i_1 j_1} (q^{-r+1}a) ... t_{i_r j_r} (q^{r-1}a)
\end{equation}
where $j_k=1$ for $k \leq r-j$ and $j_k =2$ otherwise. We skip the proof of the fact that this is indeed a module, and that it's irreducible, but note that it involves repeated use of the ``commutation relations'' in equation $\ref{eq:commutationformq2hat}$.

\begin{mythm}
$\widehat{SL_q(2)}$ has an irreducible comodule $ W_a(r)$ such that $\bar W_a(r)$ is isomorphic to the $U_q(\widehat{\mathfrak{sl}_2})$ module $V_a(r)$.
\end{mythm}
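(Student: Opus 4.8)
The plan is to read off the $U_q(\widehat{\mathfrak{sl}_2})$-action on the dual space $\bar W_a(r)$ directly from the coaction coefficients $\alpha_{lj}$ of equation \ref{eq:evaluationcomodule} together with the pairing of equation \ref{eq:duality}, and then to match the resulting operators against the generator action on $V_a(r)$ recorded in equation \ref{eq:module}. Concretely, let $\{\bar u_j\}_{0\le j\le r}$ be the basis of $\bar W_a(r)$ dual to $\{u_j\}$; by equation \ref{eq:action} the action is $x\cdot \bar u_j=\sum_l \braket{x,\alpha_{lj}}\,\bar u_l$, so the entire problem reduces to evaluating the scalars $\braket{x,\alpha_{lj}}$ for $x$ a generator.

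First I would use the fact that evaluating $x$ against a product $t_{i_1j_1}(c_1)\cdots t_{i_rj_r}(c_r)$ is governed by the $r$-fold coproduct of $x$ (the second bullet of Definition \ref{def:duality}), where $c_k=q^{2k-r-1}a$ is the spectral parameter attached to the $k$-th tensor factor in $\alpha_{lj}$. Since the generators are either grouplike ($K_0,K_1$) or skew-primitive ($e_i,f_i$), these iterated coproducts are explicit: $\Delta^{(r)}(K_i)=K_i\otimes\cdots\otimes K_i$, while $\Delta^{(r)}(e_i)$ and $\Delta^{(r)}(f_i)$ are sums of $r$ terms, each carrying a single $e_i$ or $f_i$ in one slot and grouplikes elsewhere. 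Combined with the one-factor values of equation \ref{eq:duality} --- in particular that $e_1,f_0$ pair only with the $t_{12}$-entry and $f_1,e_0$ only with the $t_{21}$-entry --- each $\braket{x,\alpha_{lj}}$ collapses to a sum over the single position $k$ at which the non-grouplike factor acts.

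I would then treat the four cases in increasing order of difficulty. The grouplike generators are immediate: only the diagonal index sequence survives, forcing $l=j$ and giving $K_1\bar u_j=q^{r-2j}\bar u_j$, $K_0\bar u_j=q^{2j-r}\bar u_j$, exactly as in equation \ref{eq:module}. For $e_1$ and $f_1$ the surviving sequences differ from the index sequence of $\alpha_{lj}$ in one slot, shifting $l$ by $\mp1$; summing the position-$k$ contributions --- each a power of $q$ coming from the inversion statistic $g$ together with the $K_1^{\pm1}$-weights --- gives a geometric sum that telescopes to a quantum integer, yielding $e_1\bar u_j=[j]_q\bar u_{j-1}$ and $f_1\bar u_j=[r-j]_q\bar u_{j+1}$. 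These differ from the coefficients $[r-j+1]_q$ and $[j+1]_q$ of equation \ref{eq:module}, but both are reconciled by a single rescaling $\bar u_j\mapsto c_j v_j$ with $c_{j+1}/c_j=[j+1]_q/[r-j]_q$ (e.g. $c_j=[j]_q!\,[r-j]_q!/[r]_q!$), which is nonzero since $q$ is not a root of unity.

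The main obstacle is the affine pair $e_0,f_0$, where the position-dependent spectral parameter $c_k=q^{2k-r-1}a$ enters each term through $\braket{e_0,t_{21}(c_k)}=q^{-1}c_k$ and $\braket{f_0,t_{12}(c_k)}=qc_k^{-1}$. The delicate point is that the $k$-dependence of $c_k$ must combine with the $g$-weights and the $K_0^{\pm1}$-weights so that the geometric sum again collapses to a quantum integer while the leftover powers of $q$ and the factor $a^{\pm1}$ assemble into the clean prefactors of equation \ref{eq:module}. I expect this to give $e_0\bar u_j=q^{-1}a\,[r-j]_q\bar u_{j+1}$ and $f_0\bar u_j=qa^{-1}[j]_q\bar u_{j-1}$, and the crucial verification is that both are compatible with the \emph{same} recursion $c_{j+1}/c_j=[j+1]_q/[r-j]_q$ found above --- so that $\bar u_j\mapsto c_j v_j$ is a genuine isomorphism of $U_q(\widehat{\mathfrak{sl}_2})$-modules $\bar W_a(r)\xrightarrow{\sim}V_a(r)$. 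Finally, since $V_a(r)$ is irreducible and the correspondence $W\mapsto\bar W$ is a dimension-preserving duality matching subcomodules with submodules, irreducibility of $W_a(r)$ follows at once from that of $V_a(r)$.
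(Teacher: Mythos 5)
Your proposal follows essentially the same route as the paper's proof: compute the generator action on $\bar W_a(r)$ via the iterated coproducts $\Delta^{r-1}$ and the pairing of equation \ref{eq:duality}, obtain exactly the coefficients $[j]_q$, $[r-j]_q$, $q^{-1}a[r-j]_q$, $qa^{-1}[j]_q$ that the paper finds, and then rescale by quantum binomial coefficients (your $c_j=[j]_q!\,[r-j]_q!/[r]_q!$ is precisely the inverse of the paper's change of basis $\bar u_j\mapsto {r \choose j}_q\bar u_j$) to match equation \ref{eq:module}. Your closing annihilator argument deducing irreducibility of $W_a(r)$ from that of $V_a(r)$ is a small bonus: it is the easy direction of the duality (requiring no non-degeneracy), and it covers a point the paper explicitly skips in the preceding subsection.
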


\begin{proof}

 A simple computation using equation $\ref{eq:action}$ and the duality relation $\ref{eq:duality}$ shows that $\bar W_{a}(1)$ is isomorphic to $V_{a}(1)$. 

For $r \geq 2$ let $\Delta^{r-1}: U_q(\widehat{\mathfrak{sl}_2}) \to U_q(\widehat{\mathfrak{sl}_2})^{\otimes r} $ be defined as the composition $(\Delta \otimes I \otimes .. \otimes I)..(\Delta \otimes I) \Delta$ where we have $r-1$ terms in the composition. This is an asymmetry in our definition because $\Delta$ act on the left side; it is taken care of by coassociativity. The following formulas are well-known:

\[
\Delta^{r-1}(K_i) = K_i \otimes K_i \otimes ... \otimes K_i
\]
\[
\Delta^{r-1}(e_i) = 1 \otimes ... \otimes 1 \otimes e_i + 1 \otimes .. \otimes e_i \otimes K_i + .. + e_i \otimes K_i \otimes ... \otimes K_i
\]
\[
\Delta^{r-1} (f_i) = f_i \otimes 1 \otimes ... \otimes 1 + K_i^{-1} \otimes f_i \otimes ... \otimes 1 + .. + K_i^{-1} \otimes ... \otimes K_i^{-1}  \otimes f_i
\]

We are now ready to prove the following theorem relating comodules of $\widehat{SL_q(2)}$ and modules of $U_q(\widehat{\mathfrak{sl}_2})$. 

The generators of $U_q(\widehat{\mathfrak{sl}_2})$ will act on $\bar W_a(r)$ via the formula mentioned at the beginning of the subsection: $x \cdot \bar u_i = \braket{x, \alpha_{ji}} \bar u_j$. So we are only interested in the coefficients $\braket{x, \alpha_{ji}} $ for $\alpha_{ji}$ defined in equation $\ref{eq:evaluationcomodule}$. For $K_i$ the coefficients $\braket{K_i, t_{kl}(x)}$ are non-zero only when $\delta_{kl}=1$. It is not too hard to see that:
\[
\braket{K_1, \alpha_{ii}} = q^{r-2i}, \braket{K_0,\alpha_{ii}} = q^{2i-r}
\]

For $e_1$, note that $\braket{e_1, t_{kl}(x)}$ is non-zero only when $k=1, l=2$. Looking at the formula for $\Delta^{r-1} (e_1)$ we conclude that the only non-zero coefficients will be $\braket{e_1,\alpha_{j-1, j} }$. Only $j$ terms in the expression of  $\alpha_{j-1, j}$ will be non-zero under the bracket with $e_1$, namely 
\[
\braket{e_1, g(1,1,...,i_r) t_{1 1} (q^{-r+1}a) ...t_{1,1}(q^{r-2j-1}a)  t_{i_{r-j+1} 2}(q^{r-2j+1}a) ... t_{i_r 2} (q^{r-1}a)}
\]
where only one of the $i_k, k \in [ r-j+1, r]$ is $1$ and the rest are $2$. The value of the term above will be be $q^{1}$. Summing over all possible terms we get $\braket{e_1,\alpha_{j-1, j} } = q^{-j+1}+q^{-j+3}+..+q^{j-1} = [j]_q$. This means that 
\[
e_1 \bar u_j = [j]_q \bar u_{j-1}
\]

In a similar fashion we obtain

\[
e_0 \bar u_j = q^{-1}a [r-j]_q \bar u_{j+1}
\]
\[
f_1 \bar u_j =  [r-j]_q \bar u_{j+1}
\]
\[
f_0 \bar u_j =   qa^{-1} [j]_q \bar u_{j-1}
\]

By making a change of basis in $\bar W_a(r)$ that takes $\bar u_j \to {r \choose j}_q \bar u_j$ we get the exact same action of the generators on $\bar W_a(r)$ as on $V_a(r)$, see equation $\ref{eq:module}$.

\end{proof}

\subsection{Dual of an evaluation comodule}

It is well know that given a Hopf algebra $H$ and a module $V$, then $V^*$ will also be a module via the action $x \cdot v^*(v) = v^*(S(x)v)$. One can write this action diagrammatically and ``reverse all arrows'' in order to come up with a similar formula for the comodules of a Hopf algebra. Here we skip the details and write down the formula directly. If $W$ is a comodule of $H$ such that the coaction takes $w_i \to \alpha_{ij} \otimes w_j$ with $\alpha_{ij} \in H$, then its dual $W^*$ is a comodule of $H$ via the coaction $w^*_i \to S^{-1}(\alpha_{ji}) \otimes w^{*}_{j}$, where $w^*_i(w_j) = \delta_{ij}$.

\begin{mypro}
The dual of $W_{a}(n)$ (as an $\widehat{SL_q(2)}$ comodule) is isomorphic to $W_{q^{-2}a}(n)$. 
\end{mypro}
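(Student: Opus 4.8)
The plan is to work directly from the explicit coaction formula \ref{eq:evaluationcomodule} together with the dual-comodule rule stated just above: if $W$ has coaction $w_i \mapsto \sum_j \alpha_{ij}\otimes w_j$, then $W^*$ has coaction $w_i^* \mapsto \sum_j S^{-1}(\alpha_{ji})\otimes w_j^*$. So I first record $S^{-1}$ on the generators. Inverting \ref{eq:hatantipode} (replace $x$ by $q^{-2}x$ in each line) gives $S^{-1}(t_{11}(x)) = t_{22}(q^{-2}x)$, $S^{-1}(t_{22}(x)) = t_{11}(q^{-2}x)$, $S^{-1}(t_{12}(x)) = -q^{-1}t_{12}(q^{-2}x)$, and $S^{-1}(t_{21}(x)) = -q\,t_{21}(q^{-2}x)$. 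The crucial feature is the uniform shift $x \mapsto q^{-2}x$ of the spectral parameter, which is exactly what should produce the shift $a\mapsto q^{-2}a$ in the statement. Proving the isomorphism then amounts to exhibiting, for each comodule, an invertible scalar matrix $P$ intertwining the two coaction matrices, i.e. $M'P = PM$, where $M$ is the coaction matrix of $W_{q^{-2}a}(n)$ and $M'$ that of $W_a(n)^*$.

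I would first settle the case $n=1$, from which everything else follows, mirroring the situation recalled for $SL_q(2)$. Here $W_a(1)=W_a$ has coaction matrix $T(a)$, so by the rule above $W_a^*$ has coaction matrix $M' = \left(\begin{smallmatrix} t_{22}(q^{-2}a) & -q\,t_{21}(q^{-2}a)\\ -q^{-1}t_{12}(q^{-2}a) & t_{11}(q^{-2}a)\end{smallmatrix}\right)$, while $W_{q^{-2}a}$ has coaction matrix $M = T(q^{-2}a)$. A short check shows that the anti-diagonal matrix $P = \left(\begin{smallmatrix} 0 & -q \\ 1 & 0 \end{smallmatrix}\right)$ satisfies $M'P = PM$, so $P$ defines a comodule isomorphism $W_a^* \xrightarrow{\sim} W_{q^{-2}a}$. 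This establishes in particular that $W_c^*\cong W_{q^{-2}c}$ for every $c\in\Gamma$.

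To reach general $n$ I would use functoriality of the dual under tensor products. By construction $W_a(n)$ is the subcomodule of $\bigotimes_{k=0}^{n-1} W_{q^{-n+1+2k}a}$ generated by the highest weight vector $w_{1,\dots,1}$. Dualizing turns a subcomodule into a quotient comodule and reverses the order of the tensor factors (because $S^{-1}$ is an algebra anti-homomorphism), so $W_a(n)^*$ is a quotient of $\bigotimes_{k=0}^{n-1} W_{q^{-n+1+2k}a}^* \cong \bigotimes_{k=0}^{n-1} W_{q^{-n-1+2k}a}$, using the $n=1$ result on each factor. These factors form exactly the same multiset as those defining $W_{q^{-2}a}(n)$, the two orderings being interchanged by the braidings $\tau R_q(\cdot)$, which are comodule isomorphisms by the Proposition on $\tau R(yx^{-1})$. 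Since $W_a(n)^*$ is irreducible (being the dual of an irreducible comodule), one matches the image of $(w_{1,\dots,1})^*$ with the generating vector of $W_{q^{-2}a}(n)$ to identify the two. The main obstacle is precisely this last bookkeeping step: tracking the reversal of tensor factors under dualization and checking that the distinguished covector lands on the highest weight vector of $W_{q^{-2}a}(n)$, so that irreducibility forces the quotient to be $W_{q^{-2}a}(n)$ and not another subquotient of the same weight. A cleaner alternative avoids this by arguing on the module side: the rule above shows that $\overline{W_a(n)^*}$ carries the $S^{-1}$-dual $U_q(\widehat{\mathfrak{sl}_2})$-module structure on $\overline{W_a(n)}\cong V_a(n)$, and since the $S$-dual satisfies $V_a(n)^*\cong V_{q^2a}(n)$ (Chari and Pressley), the $S^{-1}$-dual is $V_{q^{-2}a}(n)\cong \overline{W_{q^{-2}a}(n)}$; as the bar functor reflects isomorphisms, the claim follows.
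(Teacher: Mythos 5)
Your $n=1$ step is correct: with the dual coaction matrix $M'$ you wrote down, the intertwiner $P=\left(\begin{smallmatrix}0&-q\\1&0\end{smallmatrix}\right)$ does satisfy $M'P=PM$, and this is exactly the explicit isomorphism the paper alludes to (but does not write out) for $n=1$. The gap is in the passage to general $n$. In your main route you assert that the reversed product $W_{q^{n-3}a}\otimes\cdots\otimes W_{q^{-n-1}a}$ and the ordered product defining $W_{q^{-2}a}(n)$ are ``interchanged by the braidings $\tau R_q(\cdot)$, which are comodule isomorphisms.'' The Proposition you cite only gives comodule \emph{homomorphisms}, and for precisely the parameters occurring here they are not invertible: adjacent factors have ratio $q^{\pm2}$, and the paper notes that $R_q(q^2)$ has rank $1$ while $R_q(q^{-2})$ has rank $3$. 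This degeneracy cannot be bookkept away --- it is the very reason $W_a(n)$ exists as a proper subcomodule of the tensor product in the first place (compare Proposition \ref{prop:irreducibility}), and for such ratios $W_x\otimes W_y$ and $W_y\otimes W_x$ are genuinely non-isomorphic (one has a one-dimensional subcomodule and a three-dimensional quotient, the other the opposite). So the reordering step fails as stated; repairing it would require something substantively new, e.g.\ showing the reversed product has a unique irreducible quotient and identifying it with the image of the composite (singular) braiding.

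Your ``cleaner alternative'' has a different gap: it needs the bar functor to reflect isomorphisms, i.e.\ that $\overline{W_1}\cong\overline{W_2}$ as $U_q(\widehat{\mathfrak{sl}_2})$-modules forces $W_1\cong W_2$ as comodules. That is exactly Lemma \ref{lemma:uniqueness} of the paper, which is proved only under the assumption that the duality pairing in equation \ref{eq:duality} is non-degenerate --- an assumption the paper explicitly flags as unproven, with the results depending on it labelled as conjectural. So that route yields only a conditional proof. The paper's own argument avoids both issues: for general $n$ it exhibits explicit evaluation and coevaluation maps $ev\colon W_{q^{-2}a}(n)\otimes W_a(n)\to k$ and $coev\colon k\to W_a(n)\otimes W_{q^{-2}a}(n)$, checks the zigzag identities, and checks that both maps are $\widehat{SL_q(2)}$-comodule homomorphisms; rigidity then identifies $W_a(n)^*$ with $W_{q^{-2}a}(n)$. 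To make your proof unconditional you would need to do something of that sort, or extend your explicit $n=1$ intertwiner to the basis of equation \ref{eq:evaluationcomodule}, rather than reorder tensor factors by braidings.
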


\begin{proof}
When $n=1$, one can prove this by writing down the formula above and coming up with an explicit isomorphism. An interesting fact is that one can also look at the homomorphism $\tau R(q^2): W_{q^{-2}a} \otimes W_{a} \to W_{a} \otimes W_{q^{-2}a}$ which has rank $1$ and notice that it can be interpreted as an evaluation map onto its image. $W_{q^{-2}a} \otimes W_{a}$ has a three dimensional subcomodule (the image of $\tau R(q^{-2})$), we can quotient by that subcomodule and treat the map $\tau R(q^2): W_{q^{-2}a} \otimes W_{a} \to W_{a} \otimes W_{q^{-2}a}$ as an coevaluation map. One can then show that these maps satisfy the necessary axioms for evaluation and coevaluation maps (for example $ (I \otimes ev)(coev \otimes I) = I$). This will then produce an isomorphism between $W_{q^{-2}a}(n)$ and $W_{a}(n)^*$.

For general $n$, one can define the following maps: the evaluation map $ev: W_{q^{-2}a}(n) \otimes W_{a}(n) \to k$ given by 

\[
ev(w_i \otimes w_j) =  \sum_{i,j} (-1)^{j} \delta_{i, r-j}  q^{r} q^{r-2}...q^{r-2(j-1)}   {r \choose j}^{-1}_q 
\]

and the coevaluation $coev : k \to W_{a}(n) \otimes W_{q^{-2}a}(n)$ given by 
\[
coev(1) = \sum_{i,j} \delta_{r-j,i} (-1)^{j}  q^{-r} q^{2-r}...q^{2(j-1)-r} {r \choose j}_q  w_{j} \otimes w_{i}
\]

One needs to show that these two maps satisfy the necessary axioms, namely 
\[
(I_{W_a(n)} \otimes ev)(coev \otimes I_{W_a(n)})=I_{W_a(n)}
\]
\[
(ev \otimes I_{W_{q^{-2}a}(n)})(I_{W_{q^{-2}a}(n)} \otimes coev) = I_{W_{q^{-2}a}(n)}
\]
where $I$ is the identity map. This is just an easy computation. Second thing that needs to be done is to show that these maps are $\widehat{SL_q(2)}$-comodule homomorphisms. We skip the details of this rather long calculation. 

\end{proof}

\subsection{A tensor product decomposition}

In $\cite{ChariPressley1}$ Chari and Pressley prove that $V_x(m) \otimes V_y(n)$ as a module of $U_q(\widehat{\mathfrak{sl}_2})$ is irreducible if and only if $\frac{x}{y} \neq q^{\pm (m+n-2p+2)}$ for any $0<p \leq \text{min} \{ m,n\}$. We prove a similar proposition for $\widehat{SL_q(2)}$:

\begin{mypro}\label{prop:irreducibility}
$W_{x}(m) \otimes W_{y}(n)$ is irreducible if and only if $\frac{x}{y} \neq q^{\pm (m+n-2p+2)}$ for any $0<p \leq \text{min} \{ m,n\}$. 
\end{mypro}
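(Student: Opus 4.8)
The plan is to transport the irreducibility question from $\widehat{SL_q(2)}$-comodules to $U_q(\widehat{\mathfrak{sl}_2})$-modules using the duality apparatus already established. By the theorem proved just above, $\overline{W_a(r)} \cong V_a(r)$ as $U_q(\widehat{\mathfrak{sl}_2})$-modules, and the duality functor $W \mapsto \bar W$ sending comodules to modules is a contravariant equivalence (the remark following the parametrized FRT proposition already flags this contravariance). The key point to establish first is that this functor is monoidal up to reversal, i.e. that $\overline{W_x(m) \otimes W_y(n)} \cong \overline{W_y(n)} \otimes \overline{W_x(m)} \cong V_y(n) \otimes V_x(m)$, and that it reflects irreducibility: a comodule is irreducible if and only if its dual module is. Once this dictionary is in place, the statement becomes a direct consequence of the Chari--Pressley criterion quoted immediately before the proposition, modulo keeping track of how the roles of $x$ and $y$ (and hence the direction of the inequality) are swapped by contravariance.

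First I would verify that $\bar W$ is a well-defined exact contravariant functor that sends irreducibles to irreducibles and preserves/reverses tensor products; this follows formally from the fact that $\widehat{SL_q(2)}$ is a Hopf algebra, so its comodule category is rigid and the duality relation of Definition~\ref{def:duality} lets one identify $\text{Comod}_{\widehat{SL_q(2)}}$-morphisms with $U_q(\widehat{\mathfrak{sl}_2})$-module morphisms. Next I would compute $\overline{W_x(m) \otimes W_y(n)}$ explicitly, checking that the coaction on the tensor product dualizes to the standard $U_q(\widehat{\mathfrak{sl}_2})$-action on $V_y(n) \otimes V_x(m)$ (the order reversal coming from $\langle uv, t\rangle = \langle u, t_{(1)}\rangle\langle v, t_{(2)}\rangle$ interacting with the contravariance). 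Then the chain of equivalences gives: $W_x(m)\otimes W_y(n)$ is irreducible $\iff$ $V_y(n) \otimes V_x(m)$ is irreducible $\iff$, by Chari--Pressley, $\frac{y}{x} \neq q^{\pm(m+n-2p+2)}$ for any $0 < p \leq \min\{m,n\}$. Since the set of forbidden ratios is symmetric under $\frac{y}{x} \leftrightarrow \frac{x}{y}$ (the $\pm$ sign absorbs the inversion), this condition is identical to $\frac{x}{y} \neq q^{\pm(m+n-2p+2)}$, yielding exactly the claimed criterion.

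I expect the main obstacle to be the monoidality-with-reversal step: one must carefully check that the contravariant duality functor genuinely takes the comodule $W_x(m)\otimes W_y(n)$ to the module $V_y(n)\otimes V_x(m)$ with the correct parameters, rather than to some twist or reindexing of it. The subtlety is that the comodule tensor product uses the comultiplication of $\widehat{SL_q(2)}$ while the module tensor product uses that of $U_q(\widehat{\mathfrak{sl}_2})$, and the pairing must intertwine these compatibly; verifying this requires tracking the evaluation parameters $q^{-r+1}a, \dots, q^{r-1}a$ through the duality pairing of equation~\ref{eq:duality} and confirming they reassemble into the spectral parameters that Chari--Pressley's criterion expects. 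A secondary, more routine point is confirming that irreducibility is genuinely reflected in both directions, which reduces to the exactness and essential surjectivity of $\bar{(\,\cdot\,)}$ on the relevant finite-dimensional categories; this should follow from the general rigidity of the setup but is worth stating cleanly rather than assuming.
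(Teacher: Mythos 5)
Your reduction-to-duality strategy has a genuine gap at exactly the point you label as ``secondary'' and ``routine'': the claim that irreducibility is reflected in both directions by the functor $W \mapsto \bar W$. One direction is indeed unconditional: if $U \subsetneq W$ is a nonzero proper subcomodule, its annihilator $U^{\perp} \subset \bar W$ is a nonzero proper submodule (for $f \in U^{\perp}$ and $u \in U$ one has $(x\cdot f)(u) = \braket{x, u_{(0)}}\, f(u_{(1)}) = 0$ because the coaction of $U$ lands in $\widehat{SL_q(2)} \otimes U$); hence $\bar W$ irreducible implies $W$ irreducible, and your ``if'' direction (Chari--Pressley ratio condition $\Rightarrow$ irreducibility of $W_x(m)\otimes W_y(n)$) survives. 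But the ``only if'' direction needs the converse transfer: given that $V_y(n)\otimes V_x(m)$ is reducible, you must produce a proper subcomodule of $W_x(m)\otimes W_y(n)$. Passing from a submodule $M \subset \bar W$ to the candidate subcomodule $M^{\perp} \subset W$ requires concluding from ``$\braket{x, a} = 0$ for all $x \in U_q(\widehat{\mathfrak{sl}_2})$'' that $a = 0$ in $\widehat{SL_q(2)}$, i.e.\ non-degeneracy of the duality pairing. The paper explicitly does not prove this: in the subsection revisiting the duality relation, non-degeneracy is an assumption described as ``a nontrivial result as far as we can tell,'' and every statement depending on it --- including precisely the statement that $W$ irreducible implies $\bar W$ irreducible, and the classification of type $\mathbf{1}$ comodules --- is demoted to conjecture status. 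So the ``contravariant equivalence'' you invoke is not available, and your proposal establishes only one implication of the proposition.

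The paper's own proof avoids the duality pairing entirely. It observes that replacing $t_{ij}(x)$ by $t_{ij}$ is consistent with the defining relations and hence gives every $\widehat{SL_q(2)}$-comodule the structure of an $SL_q(2)$-comodule; any subcomodule of $W_x(m)\otimes W_y(n)$ is then an $SL_q(2)$-subcomodule of $W(m)\otimes W(n)$, and the Clebsch--Gordan decomposition $W(m)\otimes W(n) \simeq W(m+n)\oplus\cdots\oplus W(|m-n|)$ forces it to be a direct sum of these summands. A direct computation with the highest weight vectors $\Omega_p$ of the summands $W(m+n-2p)$ then determines for exactly which ratios $x/y$ such a summand is genuinely a subcomodule over $\widehat{SL_q(2)}$, producing the criterion in both directions. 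This mirrors the argument of Proposition 4.8 of Chari--Pressley on the comodule side rather than importing its conclusion through a (conjectural) equivalence. To salvage your approach you would either have to prove non-degeneracy of the pairing --- a substantial open problem flagged by the paper --- or restructure the argument so that only the unconditional annihilator direction is used, handling the reducibility direction by an explicit construction of subcomodules, which is in effect what the paper does.
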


\begin{proof}
Let $U$ be a comodule of $\widehat{SL_q(2)}$ such that $u_i \to \alpha_{ij} \otimes u_j$, $\alpha_{ij} \in \widehat{SL_q(2)}$. Then $U$ will be a comodule of $SL_q(2)$ with coaction $u_i \to \bar \alpha_{ij} \otimes u_j$, where $\bar \alpha_{ij}$ is obtained from $\alpha_{ij}$ by replacing all $t_{ij}(x)$ with $t_ij \in SL_q(2)$. This makes sense only if replacing $t_{ij}(x)$ with $t_{ij}$ in the defining relations of $\widehat{SL_q(2)}$ would not create any inconsistencies. 

The defining relations of $\widehat{SL_q(2)}$ are equation $\ref{eq:FRT}$ and setting $\det_q(x) =1$. Doing the replacement in $\det_q(x)$ gives us $\det_q = 1 \in SL_q(2)$. Equation $\ref{eq:FRT}$ is equivalent to $\tau R_q(y^{-1}x)$ is comodule homomorphims. But $\tau R_q(y^{-1}x) = \tau R_q - y^{-1}x (R^{-1}_q) \tau$, where $R_q$ is the $R$-matrix corresponding to $U_q(\mathfrak{sl}_2)$. Because of that, $\tau R_q$ is a $SL_q(2)$-comodule homomorphism $: V \otimes V  \to V \otimes V$, and so is $(R^{-1}_q) \tau$ (basically the inverse). Since $SL_q(2)$ is defined in such a way that $\tau R_q$ is a homomorphism, there are no inconsistencies.

If $W_{x}(m) \otimes W_{y}(n)$ has a subcomodule $U$, then $U$ will also be a subcomodule of $W(m) \otimes W(n)$, where $W(r)$ is the $r+1$ dimensional comodule of $SL_q(2)$. But $W(m) \otimes W(n)$ splits just like it does for $U_q(\mathfrak{sl}_2)$, namely 

\begin{equation}\label{eq:CG}
W(m) \otimes W(n) \simeq W(m+n) \oplus ... \oplus W(|m-n|). 
\end{equation}

This will mean $U$ will be a direct sum of some of the summands in $\ref{eq:CG}$. One can then pick up the highest weight vector $\Omega_p$ in $W(m+n-2p)$ and can show by the way of computation that $\Omega_p$ will be part of a subcomodule of $W(m) \otimes W(n)$ not containing $\Omega_0$ if and only if $\frac{b}{a} = q^{-(m+n-2p+2)}$ for $0 < p \leq m,n$. 

One can then show that $W(m) \otimes W(n)$ has a subcomodule containing the highest weight vector in tensor product if and only if $\frac{b}{a} = q^{(m+n-2p+2)}$ for $0 < p \leq m,n$.

\end{proof}

Note that the argument we used in the proof above is basically the same argument as in Proposition 4.8 of $\cite{ChariPressley1}$.

\subsection{The duality relation revised}

In this section we assume the duality relation defined in $\ref{eq:duality}$ is non-degenerate. This is a nontrivial result as far as we can tell. We will prove a theorem based on this assumption that is meant to be taken as a conjecture. At the end of the subsection we discuss the implications of these results. 

\begin{mypro}
Let $W$ be an irreducible finite dimensional comodule of $\widehat{SL_q(2)}$. Then $\bar W$ is an irreducible module of $U_q(\widehat{\mathfrak{sl}_2})$. 
\end{mypro}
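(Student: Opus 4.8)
The plan is to prove the contrapositive: if $\bar W$ is a \emph{reducible} $U_q(\widehat{\mathfrak{sl}_2})$-module, then $W$ is a reducible $\widehat{SL_q(2)}$-comodule. The mechanism is the order-reversing annihilator correspondence between subspaces of $W$ and of $\bar W = W^*$. Since $W$ is finite dimensional, for a subspace $N \subseteq \bar W$ its annihilator $N^\perp = \{ w \in W : f(w) = 0 \text{ for all } f \in N \}$ satisfies $\dim N^\perp = \dim W - \dim N$ and $(N^\perp)^\perp = N$. The heart of the argument is to show that this correspondence carries submodules of $\bar W$ to subcomodules of $W$, so that a proper nonzero submodule of $\bar W$ produces a proper nonzero subcomodule of $W$, contradicting irreducibility.

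First I would record the precise relation between the coaction on $W$ and the action on $\bar W$. Writing the coaction in Sweedler form $\Delta(w) = w_{(0)} \otimes w_{(1)}$ with $w_{(0)} \in \widehat{SL_q(2)}$ and $w_{(1)} \in W$, formula $\ref{eq:action}$ reads $(x \cdot f)(w) = \braket{x, w_{(0)}} f(w_{(1)})$ for $x \in U_q(\widehat{\mathfrak{sl}_2})$ and $f \in \bar W$. For fixed $w$ and $f$, consider the element $(\mathrm{id} \otimes f)(\Delta(w)) = w_{(0)}\, f(w_{(1)}) \in \widehat{SL_q(2)}$. Pairing it against an arbitrary $x$ gives, straight from the definitions,
\[
\braket{x, (\mathrm{id} \otimes f)(\Delta(w))} = \braket{x, w_{(0)}}\, f(w_{(1)}) = (x \cdot f)(w).
\]
This single identity is the bridge between the two structures.

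Next I would take $N \subseteq \bar W$ to be a submodule and show $N^\perp$ is a subcomodule, i.e.\ $\Delta(N^\perp) \subseteq \widehat{SL_q(2)} \otimes N^\perp$. Because $(N^\perp)^\perp = N$, membership $\Delta(w) \in \widehat{SL_q(2)} \otimes N^\perp$ is equivalent to $(\mathrm{id} \otimes f)(\Delta(w)) = 0$ for every $f \in N$. Fix $w \in N^\perp$ and $f \in N$. By the bridge identity, $\braket{x, (\mathrm{id} \otimes f)(\Delta(w))} = (x \cdot f)(w)$ for all $x$; since $N$ is a submodule we have $x \cdot f \in N$, and since $w \in N^\perp$ this evaluates to $0$. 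Thus $(\mathrm{id} \otimes f)(\Delta(w))$ pairs to zero against every $x \in U_q(\widehat{\mathfrak{sl}_2})$. \emph{This is the one place the standing hypothesis enters}: non-degeneracy of $\braket{,}$ in the $\widehat{SL_q(2)}$-variable forces $(\mathrm{id} \otimes f)(\Delta(w)) = 0$, whence $\Delta(w) \in \widehat{SL_q(2)} \otimes N^\perp$ and $N^\perp$ is a subcomodule. (The reverse implication, that a subcomodule $U$ annihilates to a submodule $U^\perp$, is immediate from the subcomodule condition $u_{(1)} \in U$ and needs no non-degeneracy; together with the dimension count these yield an order-reversing bijection between the two subobject lattices, so in fact $W$ is irreducible if and only if $\bar W$ is.)

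To finish, if $\bar W$ is reducible it has a submodule $N$ with $0 \subsetneq N \subsetneq \bar W$; then $N^\perp$ is a subcomodule with $0 \subsetneq N^\perp \subsetneq W$, so $W$ is reducible, establishing the contrapositive. The main obstacle is not in this linear-algebra argument, which is routine once the pairing behaves well, but in the standing assumption itself: the whole proof rests on non-degeneracy of the duality relation $\ref{eq:duality}$, which is precisely the point flagged as unproven. Were that pairing degenerate, the implication ``pairs to zero against all $x$, hence vanishes'' would fail and the submodule-to-subcomodule half of the correspondence would collapse.
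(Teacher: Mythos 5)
Your proof is correct and is essentially the paper's own argument: the paper fixes a basis of the submodule $U \subseteq \bar W$, extends it, and uses non-degeneracy to conclude the coefficients $\alpha_{lj}$ vanish, so that the span of the remaining dual basis vectors --- which is precisely your annihilator $U^\perp$ --- is a proper subcomodule, a contradiction. Your version is merely a basis-free restatement (contrapositive instead of contradiction, annihilators instead of coefficient matrices), with the non-degeneracy hypothesis entering at exactly the same step.
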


\begin{proof}
Assume $\bar W$ has a submodule $U$. Pick a basis $\bar w_1, .., \bar w_k$ of $U$ and extend it to a basis $\bar w_1, ..., \bar w_{k}, \bar w_{k+1}, ..., \bar w_n$ of $\bar W$. Let $w_i$ be the dual basis of $W$, so that we have $\bar w_j(w_i) = \delta_{ji}$.

Define $\alpha_{jl} \in \widehat{SL_q(2)}$ such that $\Delta (w_j) = \alpha_{jl} \otimes w_l$. As discussed above (see equation $\ref{eq:action}$), we can now write the action of $x$ on $V$ as follows:
\[
x \cdot \bar w_j = \braket{x, \alpha_{lj}} \bar w_l
\]

If $U$ is a submodule of $\bar W$ then this means that $x \cdot \bar w_j \in W$ for all $j \in \{ 1, ..., k\}$ which implies that $\braket{x, \alpha_{lj}}=0$ for all $l \in \{ k+1, ..., n \}, j \in \{ 1, ..., k \}$ and for all $x$. 

It then must follow that  $ \alpha_{lj}=0$ for all $l \in \{ k+1, ..., n \}, j \in \{ 1, ..., k \}$ because of the non-degeneracy of the duality form. 

Because of this, the span of all the $w_l, l \in \{ k+1, ..., n \}$ will form a subcomodule of $\widehat{SL_q(2)}$. We obtained a contradiction, therefore we are done. 
\end{proof} 

\begin{lemma}\label{lemma:uniqueness}
Let $W_1$ and $W_2$ be irreducible finite dimensional comodules of $\widehat{SL_q(2)}$ such that $\bar W_1$ and $\bar W_2$ are isomorphic as $U_q(\widehat{\mathfrak{sl}_2})$ modules. Then $W_1$ and $W_2$ are isomorphic as $\widehat{SL_q(2)}$ comodules.
\end{lemma}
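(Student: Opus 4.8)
The plan is to construct the desired comodule isomorphism explicitly by transposing the given module isomorphism, using non-degeneracy of the pairing as the only bridge between the two structures. Since $\bar W_1\cong\bar W_2$ forces $\dim W_1=\dim W_2=:n$, I would first fix bases $\{w_i\}$ of $W_1$ and $\{w_i'\}$ of $W_2$, together with the coaction coefficients $\alpha_{ij},\alpha'_{ij}\in\widehat{SL_q(2)}$ determined by $\Delta(w_j)=\sum_l\alpha_{jl}\otimes w_l$ and $\Delta(w_j')=\sum_l\alpha'_{jl}\otimes w_l'$. Recall from equation \ref{eq:action} that the induced $U_q(\widehat{\mathfrak{sl}_2})$-actions on the duals are $x\cdot\bar w_j=\sum_i\braket{x,\alpha_{ij}}\bar w_i$ and $x\cdot\bar w_j'=\sum_i\braket{x,\alpha'_{ij}}\bar w_i'$.

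Next I would translate the hypothesis into coordinates. Let $\phi\colon\bar W_1\to\bar W_2$ be a $U_q(\widehat{\mathfrak{sl}_2})$-module isomorphism with matrix $C=(c_{ij})$, so $\phi(\bar w_j)=\sum_i c_{ij}\bar w_i'$. Writing out $\phi(x\cdot\bar w_j)=x\cdot\phi(\bar w_j)$ and comparing coefficients gives, after pulling the scalars $c_{kj}$ inside the pairing,
\[
\Big\langle\, x,\ \sum_{k} c_{ik}\alpha_{kj}-\sum_{k} c_{kj}\alpha'_{ik}\,\Big\rangle=0
\quad\text{for all } i,j \text{ and all } x\in U_q(\widehat{\mathfrak{sl}_2}).
\]
This is the crux of the argument and the only place the standing non-degeneracy hypothesis of the subsection is used: invoking it exactly as in the preceding proposition, I can strip off the pairing and obtain the genuine identity $\sum_k c_{ik}\alpha_{kj}=\sum_k c_{kj}\alpha'_{ik}$ in $\widehat{SL_q(2)}$, i.e.\ $CA=A'C$ where $A=(\alpha_{ij})$ and $A'=(\alpha'_{ij})$.

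Finally I would define the candidate comodule map $\psi\colon W_2\to W_1$ by the transposed matrix, $\psi(w_j')=\sum_i c_{ji}w_i$, and verify it is a morphism of $\widehat{SL_q(2)}$-comodules. A direct computation shows that the comodule condition $(\mathrm{id}\otimes\psi)\Delta_{W_2}=\Delta_{W_1}\psi$, spelled out on basis vectors, reduces entry-by-entry to exactly the same matrix identity $CA=A'C$ established above; hence $\psi$ is a comodule homomorphism. Since $\phi$ is invertible, so are $C$ and its transpose, and therefore $\psi$ is a bijective comodule morphism, giving $W_1\cong W_2$ as $\widehat{SL_q(2)}$-comodules.

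I would close by noting that the argument really expresses full faithfulness of the contravariant duality functor $W\mapsto\bar W$: non-degeneracy makes $f\mapsto f^{*}$ injective on Hom-spaces, and the transpose construction above supplies the fullness needed to lift the inverse isomorphism. The main (and essentially only) obstacle is the passage through non-degeneracy in the displayed step. Irreducibility of the $W_i$ is not strictly needed for the existence of the isomorphism; combined with the previous proposition it merely guarantees that $\bar W_1,\bar W_2$ are irreducible and that the lifted $\psi$ is unique up to scalar by Schur's lemma.
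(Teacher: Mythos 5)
Your proof is correct and follows essentially the same route as the paper's: both translate the module isomorphism into equalities of pairings against coaction coefficients and then invoke non-degeneracy to strip the pairing, obtaining the comodule intertwining identity. The only difference is cosmetic—the paper normalizes the bases so the isomorphism matrix is the identity (forcing $\alpha_{ij}=\beta_{ij}$ outright), whereas you keep a general matrix $C$, derive $CA=A'C$, and transpose to build the comodule map explicitly.
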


\begin{proof}
Let $w^{(1)}_i$ be a basis of $W_1$ with $\Delta(w^{(1)}_i) = \sum_j \alpha_{ij} \otimes w^{(1)}_j$ and $w^{(2)}_i$ a basis of $W_2$ with $\Delta(w^{(2)}_i) = \sum_j \beta_{ij} \otimes w^{(2)}_j$, for $\alpha_{ij}, \beta_{ij} \in \widehat{SL_q(2)}$ such that the isomorphism $f$ between $\bar W_1$ and $\bar W_2$ takes $\bar w^{(1)}_i \to \bar w^{(2)}_i$. It follows that

\begin{gather*}
x \cdot w^{(1)}_i (w^{(1)}_k) =  \sum_j \braket{x, \alpha_{kj}} w^{(1)}_i (w^{(1)}_j) = \braket{x, \alpha_{ki}} \\
x \cdot w^{(2)}_i (w^{(2)}_k) =  \sum_j \braket{x, \beta_{kj}} w^{(2)}_i (w^{(2)}_j) = \braket{x, \beta_{ki}}
\end{gather*}

The fact that $f$ is a module homomorphism implies that if $x \cdot \bar w^{(1)}_i = \gamma_{ij} \bar w^{(1)}_j$, then $x \cdot \bar w^{(2)}_i = \gamma_{ij} \bar w^{(2)}_j$ for any $x \in U_q(\widehat{\mathfrak{sl}_2})$.  

We  know that
\begin{gather*}
x \cdot \bar  w^{(1)}_i (w^{(1)}_k) = \sum_j \gamma_{ij} \bar  w^{(1)}_j (w^{(1)}_k) = \gamma_{ik}
x \cdot \bar  w^{(2)}_i (w^{(2)}_k) = \sum_j \gamma_{ij} \bar  w^{(2)}_j (w^{(2)}_k) = \gamma_{ik} \\
\end{gather*}

It follows that $\braket{x, \alpha_{ki}} = \gamma_{ik}  = \braket{x, \beta_{ki}}$ for all $x$, which implies that $\alpha_{ki} = \beta_{ki}$ by the non-degeneracy of $\braket{,}$. We conclude that $f$ is a comodule isomorphism between $V$ and $W$.

\end{proof}

Because of the way the duality is defined, we can show that the $K_1 K_0 $ must act as the identity on any $U_q(\widehat{\mathfrak{sl}_2})$ module $\bar W$ obtained from a comodule $W$ of $\widehat{SL_q(2)}$.

\begin{lemma}
Let $W$ be a comodule of $\widehat{SL_q(2)}$, and $\bar W$ the associated module of $U_q(\widehat{\mathfrak{sl}_2})$. Then $K_1 K_0 $ acts as the identity on $\bar W$. 
\end{lemma}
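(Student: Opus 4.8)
The plan is to reduce the statement to a single identity about the pairing, namely that evaluating $\braket{K_1K_0,\,\cdot\,}$ agrees with the counit $\epsilon$ on all of $\widehat{SL_q(2)}$, and then to read off the action on $\bar W$ from the comodule counit axiom. Write the coaction on $W$ as $\Delta(w_j) = \sum_l \alpha_{jl}\otimes w_l$ with $\alpha_{jl}\in\widehat{SL_q(2)}$. By the action formula $\ref{eq:action}$ we have $(K_1K_0)\cdot \bar w_j = \sum_l \braket{K_1K_0,\alpha_{lj}}\,\bar w_l$. On the other hand, applying the counit to the first factor of the coaction and using $(\epsilon\otimes\mathrm{id})\Delta = \mathrm{id}$ gives $\sum_l \epsilon(\alpha_{jl})w_l = w_j$, so that $\epsilon(\alpha_{lj})=\delta_{lj}$. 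Hence it suffices to prove that $\braket{K_1K_0,a}=\epsilon(a)$ for every $a\in\widehat{SL_q(2)}$, since this would yield $(K_1K_0)\cdot \bar w_j = \sum_l \delta_{lj}\,\bar w_l = \bar w_j$.

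To prove that single identity, I would first observe that both $\braket{K_1K_0,\,\cdot\,}$ and $\epsilon$ are algebra homomorphisms $\widehat{SL_q(2)}\to k$. For $\epsilon$ this is automatic, as the counit of any bialgebra is an algebra map. For the pairing, note that $K_1$ and $K_0$ are group-like, so their product is group-like as well, $\Delta(K_1K_0)=(K_1K_0)\otimes(K_1K_0)$. Combining this with the second axiom of Definition $\ref{def:duality}$ gives, for all $x,y\in\widehat{SL_q(2)}$,
\[
\braket{K_1K_0,\,xy} = \braket{(K_1K_0)_{(1)},x}\braket{(K_1K_0)_{(2)},y} = \braket{K_1K_0,x}\braket{K_1K_0,y},
\]
while $\braket{K_1K_0,1}=\epsilon(K_1K_0)=1$; thus $\braket{K_1K_0,\,\cdot\,}$ is unital and multiplicative.

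Since two algebra homomorphisms agreeing on a generating set coincide, and $\widehat{SL_q(2)}$ is generated as an algebra by $1$ and the $t_{ij}(x)$, it remains only to check agreement on these generators. Using $\Delta(t_{ij}(x))=\sum_k t_{ik}(x)\otimes t_{kj}(x)$ together with the first axiom of Definition $\ref{def:duality}$, the value $\braket{K_1K_0,t_{ij}(x)}$ is the $(i,j)$ entry of the matrix product $\braket{K_1,T(x)}\braket{K_0,T(x)}$. By the duality formulas $\ref{eq:duality}$ these two matrices are $\mathrm{diag}(q,q^{-1})$ and $\mathrm{diag}(q^{-1},q)$, whose product is the identity matrix, so $\braket{K_1K_0,t_{ij}(x)}=\delta_{ij}=\epsilon(t_{ij}(x))$. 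This establishes $\braket{K_1K_0,a}=\epsilon(a)$ for all $a$ and completes the argument. There is no serious analytic obstacle here: the only substantive point is the conceptual one that group-likeness of $K_1K_0$ upgrades the pairing into an algebra map, after which the verification collapses to the elementary $2\times2$ diagonal product above; well-definedness of both maps on the quotient by the FRT relations and $\det_q(x)=1$ is already guaranteed by the duality theorem and by $\epsilon$ being the counit.
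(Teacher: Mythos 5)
Your proof is correct and follows exactly the route the paper takes: the paper's entire proof is the assertion that $\braket{K_1 K_0, t} = \epsilon(t)$ for all $t \in \widehat{SL_q(2)}$, which is precisely the identity you establish (via group-likeness of $K_1K_0$ and the check $\mathrm{diag}(q,q^{-1})\,\mathrm{diag}(q^{-1},q)=I$ on generators) before reading off the action on $\bar W$ from the counit axiom. You have simply filled in the details the paper leaves implicit.
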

\begin{proof}
This is due to the fact that $\braket{K_1 K_0  , t} = \epsilon(t) $ for all $t \in \widehat{SL_q(2)}$.
\end{proof}

Let $\mathcal{T}$ be the quotient of $\widehat{SL_q(2)}$ by setting $t_{12}(x)$ and $t_{21}(x)$ equal to $0$ for all $x \in \mathbb{C}^*$. Given a comodule $W$ of $\widehat{SL_q(2)}$ one can build a $\mathcal{T}$-comodule by the usual method. 

We say a comodule $W$ of $\widehat{SL_q(2)}$ is of type $\mathbf{1}$ if the coaction on the corresponding $\mathcal{T}$-comodule acts semisimply; namely, if $W$ has a basis $w_i$ such that the coaction acts as $w_i \to t_i \otimes w_i$ (note that we do not sum over $i$) for $t_i \in \mathcal{T}$. 

\begin{lemma}
Let $W$ be a type $\bf{1}$ comodule of $\widehat{SL_q(2)}$, and $\bar W$ the associated module of $U_q(\widehat{\mathfrak{sl}_2})$. Then $K_i $ acts semisimply on $\bar W$. 
\end{lemma}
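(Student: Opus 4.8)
The plan is to exploit the fact that, because $\Delta(K_i) = K_i \otimes K_i$, the linear functional $\braket{K_i, -}$ is a character (an algebra homomorphism) on $\widehat{SL_q(2)}$, and to combine this with the diagonal structure that the type $\mathbf{1}$ hypothesis imposes on the coaction. First I would record that $\braket{K_i, -}\colon \widehat{SL_q(2)} \to k$ is multiplicative: applying the duality axiom $\braket{u, ab} = \braket{u_{(1)}, a}\braket{u_{(2)}, b}$ with $u = K_i$ and $\Delta(K_i) = K_i \otimes K_i$ yields $\braket{K_i, ab} = \braket{K_i, a}\braket{K_i, b}$ for all $a, b$. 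From the duality table \ref{eq:duality} we have $\braket{K_i, t_{12}(x)} = \braket{K_i, t_{21}(x)} = 0$ for every $x$. Since a character that vanishes on the generators $t_{12}(x), t_{21}(x)$ of the ideal defining $\mathcal{T}$ automatically vanishes on the whole ideal, $\braket{K_i, -}$ descends to a character on $\mathcal{T}$; equivalently, $\braket{K_i, \alpha}$ depends only on the image $\bar\alpha$ of $\alpha$ in $\mathcal{T}$.

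With this in hand I would invoke the explicit action formula. Fix the type $\mathbf{1}$ basis $\{w_j\}$ of $W$ with $\Delta(w_j) = \sum_l \alpha_{jl}\otimes w_l$; by definition the type $\mathbf{1}$ hypothesis says exactly that the image of $\alpha_{jl}$ in $\mathcal{T}$ is $\bar\alpha_{jl} = \delta_{jl}\, t_j$ for some $t_j \in \mathcal{T}$. Recalling that the generators act on $\bar W$ by $x \cdot \bar w_j = \sum_l \braket{x, \alpha_{lj}}\,\bar w_l$, substituting $x = K_i$ and using the factorization above gives $\braket{K_i, \alpha_{lj}} = \braket{K_i, \bar\alpha_{lj}} = \delta_{lj}\braket{K_i, t_l}$. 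Hence $K_i \cdot \bar w_j = \braket{K_i, t_j}\,\bar w_j$, so $K_i$ is diagonal in the basis $\{\bar w_j\}$ and therefore acts semisimply, which is the claim.

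The argument is formal once those two observations are in place, and I expect the only point needing genuine care — the main, if modest, obstacle — to be the factorization of $\braket{K_i, -}$ through $\mathcal{T}$: one must check that vanishing on the generators forces vanishing on the entire two-sided ideal they generate. This is precisely where multiplicativity enters, since for a typical ideal element $\sum_m b_m\, t_{12}(x_m)\, c_m$ one gets $\braket{K_i, \sum_m b_m\, t_{12}(x_m)\, c_m} = \sum_m \braket{K_i, b_m}\braket{K_i, t_{12}(x_m)}\braket{K_i, c_m} = 0$, and likewise for $t_{21}$. I would also remark that the same mechanism reproduces the earlier fact that $K_1 K_0$ acts as the identity, since $\braket{K_1 K_0, -}$ is then the character agreeing with $\epsilon$ on the diagonal generators, giving a uniform explanation of both semisimplicity and the normalization $K_1 K_0 = 1$.
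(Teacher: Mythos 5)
Your proof is correct and follows essentially the same route as the paper's: the type $\mathbf{1}$ hypothesis supplies a basis in which the $\mathcal{T}$-coaction is diagonal, $w_j \to t_j \otimes w_j$, and then $K_i \cdot \bar w_j = \braket{K_i, t_j}\,\bar w_j$, so $K_i$ is diagonalized. The only difference is that you spell out the step the paper leaves implicit --- that $\braket{K_i,-}$ is multiplicative and vanishes on the ideal generated by the $t_{12}(x), t_{21}(x)$, hence factors through $\mathcal{T}$ --- which is a worthwhile clarification rather than a different argument.
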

\begin{proof}
Since $W$ is a type $\bf{1}$ comodule, there is a basis $w_i$ of $W$ such that the coaction on the $\mathcal{T}$-comodule $W$ is $w_j \to t_j \otimes w_j$ for some $t_i \in\mathcal{T}$. It follows that $K_i$ will act semisimply on $\bar W$, it will take $\bar w_j \to \braket{K_i,t_j} \bar w_j$. 
\end{proof}

\begin{mycon}
Every finite dimensional irreducible type $\bf{1}$ comodule of $\widehat{SL_q(2)}$ will be isomorphic to a tensor product of the form:
\[
W_{a_1} (r_1) \otimes W_{a_2}(r_2) \otimes ... \otimes W_{a_n} (r_n)
\]
\end{mycon}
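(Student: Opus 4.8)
The plan is to push the problem across the duality of equation \ref{eq:duality} onto the representation theory of $U_q(\widehat{\mathfrak{sl}_2})$, invoke the Chari--Pressley classification (Theorem \ref{chari}), and then pull the resulting tensor decomposition back to $\widehat{SL_q(2)}$-comodules using the evaluation comodules $W_a(r)$ together with the uniqueness Lemma \ref{lemma:uniqueness}. As in the rest of this subsection, the entire argument is conducted under the standing assumption that $\braket{,}$ is non-degenerate.

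First I would take an irreducible finite dimensional type $\mathbf{1}$ comodule $W$ and pass to $\bar W$. By the irreducibility Proposition of this subsection, $\bar W$ is an irreducible $U_q(\widehat{\mathfrak{sl}_2})$-module, and by the two Lemmas immediately preceding the conjecture it is of type $\mathbf{1}$: the element $K_1 K_0$ acts as the identity and each $K_i$ acts semisimply. Theorem \ref{chari} then yields an isomorphism of $U_q(\widehat{\mathfrak{sl}_2})$-modules $\bar W \cong V_{a_1}(r_1) \otimes \cdots \otimes V_{a_n}(r_n)$ for suitable $a_i \in \mathbb{C}^*$ and non-negative integers $r_i$.

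Next I would realize this decomposition on the comodule side. The natural candidate is $W' := W_{a_1}(r_1) \otimes \cdots \otimes W_{a_n}(r_n)$, where $\bar W_{a_i}(r_i) \cong V_{a_i}(r_i)$ by the theorem on evaluation comodules. The essential structural input is that the contravariant functor $W \mapsto \bar W$ is monoidal on objects: using the pairing axiom $\braket{u, xy} = \braket{u_{(1)}, x}\braket{u_{(2)}, y}$ and the fact that the tensor comodule coacts through the product $w^{(1)}_{(0)} w^{(2)}_{(0)}$, a short computation shows that under the canonical identification $(W_1 \otimes W_2)^* = W_1^* \otimes W_2^*$ the module structure on $\overline{W_1 \otimes W_2}$ is exactly $\bar W_1 \otimes \bar W_2$ with the usual coproduct action. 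Iterating gives $\bar W' \cong V_{a_1}(r_1) \otimes \cdots \otimes V_{a_n}(r_n) \cong \bar W$. Since $\bar W' \cong \bar W$ is irreducible and dualization turns a subcomodule inclusion $U' \hookrightarrow W'$ into a module surjection $\bar W' \twoheadrightarrow \bar{U'}$, simplicity of $\bar W'$ forces $U' = 0$ or $U' = W'$, so $W'$ is itself irreducible. (There is no tension with Proposition \ref{prop:irreducibility}: the parameters produced by Theorem \ref{chari} are exactly those for which the tensor product is irreducible, and the module side detects this.) Now $W$ and $W'$ are both irreducible comodules with $\bar W \cong \bar W'$, so Lemma \ref{lemma:uniqueness} delivers $W \cong W'$, which is the asserted form.

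Two points deserve emphasis as the genuine obstacles. Within this subsection the only substantively new ingredient beyond cited or already-established results is the monoidality of the bar functor; everything else is bookkeeping with the irreducibility Proposition, the two $K_i$-Lemmas, the evaluation-comodule Theorem, and Theorem \ref{chari}. The deeper and unconditional difficulty, and the reason the statement is phrased as a conjecture rather than a theorem, is the non-degeneracy of the pairing in equation \ref{eq:duality}, on which both the irreducibility Proposition and Lemma \ref{lemma:uniqueness} rest; establishing it appears to require a separate argument that is not visible from the FRT presentation alone.
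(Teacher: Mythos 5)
Your proof is correct and follows essentially the same route as the paper's: dualize $W$, use the subsection's proposition and two lemmas to conclude $\bar W$ is an irreducible type $\mathbf{1}$ module, apply Theorem \ref{chari}, and transfer the decomposition back via the evaluation comodules and Lemma \ref{lemma:uniqueness}, all under the standing non-degeneracy assumption. The only divergence is that the paper writes the corresponding comodule as $W_{a_n}(r_n) \otimes \cdots \otimes W_{a_1}(r_1)$ with reversed tensor order and leaves both the monoidality of $W \mapsto \bar W$ and the irreducibility of the candidate tensor comodule implicit, whereas you verify both explicitly (and your no-reversal computation is the one consistent with the unflipped axiom $\braket{u, xy} = \braket{u_{(1)}, x}\braket{u_{(2)}, y}$ of the duality pairing); since the parameters $a_i, r_i$ are arbitrary, this ordering discrepancy is immaterial to the statement.
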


\begin{proof}

By the lemmas and proposition we proved in this subsection, an irreducible type $\mathbf{1}$ comodule of $\widehat{SL_q(2)}$ will correspond to an irreducible type $\bf{1}$ module of $U_q(\widehat{\mathfrak{sl}_2})$. The latter have been classified by Char and Pressley, see Theorem $\ref{chari}$. They are isomorphic to tensor products of the form $V_{a_1}(r_1) \otimes ... \otimes V_{a_n}(r_n)$. We already know there are comodules $W_{a_n}(r_n) \otimes ... \otimes W_{a_1}(r_1)$ that correspond to them, so by the uniqueness result in Lemma $\ref{lemma:uniqueness}$ the proof is complete.   

\end{proof}

This result tells us that the irreducible type $\mathbf{1}$ modules of $U_q(\widehat{\mathfrak{sl}_2})$ are the same as the irreducible comodules of $\widehat{SL_q(2)}$. To actually prove this result, one can try to show the duality relation is non-degenerate. This would have other implications as well. It is known that the full category of finite dimensional modules of $U_q(\widehat{\mathfrak{sl}_2})$ is not semisimple (for the case $q \to 1$ see $\cite{ChariMoura}$) and not very well understood. A non-degenerate duality relation might allow us to study the modules of $U_q(\widehat{\mathfrak{sl}_2})$ by looking at them as comodules of $SL_q(2)$; in the same vein as using both the standard and the Drinfel'd presentation of $U_q(\widehat{\mathfrak{sl}_2})$ to study its finite dimensional representations.

A different approach to categorizing all irreducibles would be to simple develop the theory of highest weight comodules for $\widehat{SL_q(2)}$, similar to how it is done in $\cite{ParshallWang}$ for $SL_q(n)$.


\section{The free fermionic bialgebra}

\subsection{A parametrized YBE with noncommutative group}

We will now exhibit a parametrized YBE with non-abelian parameter group as given in $\cite{Korepin}$ or $\cite{Bump2}$.  

Let $\Gamma$ be the subgroup of $GL(4)$ with elements 
\begin{equation*}
x = \left ( \begin{array}{cccc}
c_1(x) & 0 & 0 & 0 \\
0 & a_1(x) & b_2(x) & 0 \\
0 & -b_1(x) & a_2(x) & 0 \\
0 & 0 & 0 & c_2(x) \end{array} \right )
\end{equation*} 
such that 
\begin{equation} \label{propertyparametrizedYBE}
a_1(x) a_2(x) + b_1(x) b_2(x) = c_1(x) c_2(x)
\end{equation}

Note that $\Gamma \simeq $GL$(2, \mathbb{C}) \times $GL$(1, \mathbb{C})$. The multiplication on $\Gamma$ is as follows for $x, y \in \Gamma$, $z=x \circ y$:
\begin{equation}
\begin{split}
a_1(z) &= a_1(x) a_1(y) - b_2(x) b_1(y) \\
a_2(z) &= a_2(x) a_2(y) - b_1(x)b_2(y) \\
b_1(z) &= b_1(x)a_1(y) + a_2(x) b_1(y)\\
b_2(z) &= a_1(x) b_2(y) + b_2(x) a_2(y)\\
c_1(z) &= c_1(x) c_1(y)\\
c_2(z) &= c_2(x) c_2(y)
\end{split}
\end{equation}

Let $V$ be a two dimensional vector space with a fixed basis $\{ v_1, v_2\}$. We define $R(x) \in \text{End}(V \otimes V)$ by the following formula:
\begin{equation*}
R(x) = \left ( \begin{array}{cccc}
a_1(x) & 0 & 0 & 0 \\
0 & b_1(x) & c_1(x) & 0 \\
0 & c_2(x) & b_2(x) & 0 \\
0 & 0 & 0 & a_2(x) \end{array} \right )
\end{equation*}

The following was noticed by Korepin $\cite{Korepin}$ and later rediscovered in $\cite{Bump2}$. 

\begin{mythm}
$R(x)$ is a solution to the parametrized YBE with parameter group $\Gamma \simeq GL(2, \mathbb{C}) \times GL(1, \mathbb{C})$. Namely, for all $x, y \in \Gamma$ the following equation holds:
\begin{equation*}
R_{12}(x) R_{13}(x \circ y) R_{23}(y) = R_{23}(y) R_{13}(x \circ y) R_{12}  (x)
\end{equation*}
\end{mythm}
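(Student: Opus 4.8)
The plan is to verify the identity by direct computation, exploiting the fact that $R(x)$ conserves the number of $v_2$'s. In the basis $\{v_1\otimes v_1,\ v_1\otimes v_2,\ v_2\otimes v_1,\ v_2\otimes v_2\}$, $R(x)$ preserves each ``weight space'' spanned by basis vectors with a fixed number of indices equal to $2$. Hence every $R_{ij}$ preserves the grading of $V\otimes V\otimes V$ by the total number of $v_2$'s, and both sides of the equation are block diagonal with blocks of dimensions $1,3,3,1$ corresponding to weights $0,1,2,3$. First I would dispose of the extreme weights: on the weight-$0$ space (spanned by $v_1\otimes v_1\otimes v_1$) every factor acts by the scalar $a_1$ of the appropriate argument, so both sides equal $a_1(x)\,a_1(x\circ y)\,a_1(y)$, and likewise the weight-$3$ space gives $a_2(x)\,a_2(x\circ y)\,a_2(y)$ on both sides; since these are scalars, the identity is immediate.

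The substance is the weight-$1$ block. Fixing the ordered basis $(v_2\otimes v_1\otimes v_1,\ v_1\otimes v_2\otimes v_1,\ v_1\otimes v_1\otimes v_2)$, I would write out $R_{12}(x)$, $R_{13}(x\circ y)$ and $R_{23}(y)$ as explicit $3\times 3$ matrices, form the two triple products, and compare the resulting matrices entry by entry after substituting the group law for $z = x\circ y$. The diagonal and corner entries reduce to equalities of monomials where the needed cancellations are formal, but the four genuine off-diagonal entries are where the work lies: there the two orderings differ by the quantity $c_1c_2 - b_1b_2 - a_1a_2$ evaluated at $x$ or at $y$, which vanishes precisely by the defining relation $\ref{propertyparametrizedYBE}$. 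Concretely, the constraint at $y$ is what matches the $(1,2)$ and $(2,1)$ entries, while the constraint at $x$ matches the $(2,3)$ and $(3,2)$ entries.

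Finally, the weight-$2$ block follows from the weight-$1$ computation by the involution $v_1\leftrightarrow v_2$ on $V$. This swap interchanges $a_1\leftrightarrow a_2$, $b_1\leftrightarrow b_2$, $c_1\leftrightarrow c_2$, and one checks that it preserves both the group law (it exchanges the formula for $a_1(z)$ with that for $a_2(z)$, and the formula for $b_1(z)$ with that for $b_2(z)$) and the constraint $\ref{propertyparametrizedYBE}$; hence the weight-$2$ identity is the image of the weight-$1$ identity and needs no separate calculation. The main obstacle is organizational rather than conceptual: keeping the three tensor slots and the three group elements $x$, $x\circ y$, $y$ straight while expanding, and recognizing that every discrepancy between the two orderings is a multiple of $\ref{propertyparametrizedYBE}$. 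The group law on $\Gamma$ is designed to make exactly this happen, so once the matrices are written down the verification is a short bookkeeping exercise.
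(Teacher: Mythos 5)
Your argument is correct, but there is no proof in the paper to compare it with: the paper records this theorem as a known result, citing Korepin \cite{Korepin} and Brubaker--Bump--Friedberg \cite{Bump2}, and gives no verification of its own. Your weight-space strategy is sound and complete in outline. Since $R(x)$ preserves the number of $v_2$'s, every $R_{ij}$ preserves the grading of $V^{\otimes 3}$, the extreme weight spaces carry the scalars $a_1(x)\,a_1(x\circ y)\,a_1(y)$ and $a_2(x)\,a_2(x\circ y)\,a_2(y)$ on both sides, and the identity reduces to the two $3\times 3$ blocks. Writing out the weight-one block confirms your accounting: the corner entries and the diagonal entries match using only the group law on $\Gamma$ (in particular the multiplicativity $c_i(x\circ y)=c_i(x)c_i(y)$ and the formulas for $a_1(z), b_1(z), b_2(z)$), while exactly four entries differ by a multiple of $a_1a_2+b_1b_2-c_1c_2$ evaluated at $y$ (the two entries coupling tensor slots $1$ and $2$) or at $x$ (the two coupling slots $2$ and $3$), which vanish by relation \ref{propertyparametrizedYBE}; the precise labels of which entries need which constraint depend on the basis-ordering convention, but the structure is exactly as you describe. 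Your reduction of the weight-two block to the weight-one block is also valid: conjugating the whole equation by $\sigma^{\otimes 3}$, where $\sigma$ swaps $v_1$ and $v_2$, replaces each $R$-matrix by the one with $a_1\leftrightarrow a_2$, $b_1\leftrightarrow b_2$, $c_1\leftrightarrow c_2$, and this relabeling preserves both the group law (it exchanges the defining formulas for $a_1(z)$ and $a_2(z)$, and for $b_1(z)$ and $b_2(z)$) and the symmetric relation \ref{propertyparametrizedYBE}, so no separate computation is needed. In short, your proposal supplies a complete, self-contained proof of a statement the paper leaves entirely to the references.
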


\subsection{Motivation}

There are many reasons to study such a Hopf algebra. We focus on two in this section. 

First of all note that the matrix $R_q(x)$ associated to $U_q(\widehat{\mathfrak{sl}_2})$ defined in $\ref{eq:affine_YBE_solution}$ is free fermionic when $q=\pm i$. This means it will satisfy the property in equation $\ref{propertyparametrizedYBE}$. That is because 
\[
(q-xq^{-1})^2 +(1-x)^2 = x(q-q^{-1})^2
\]
when $q=\pm i$. This means that the Hopf algebra we will build $\mathcal{A}_{ff}$ will be an expansion of $\widehat{SL_{\pm i}(2)}$.

Another interesting fact is that one can look at the solution of the graded parametrized YBE corresponding to the quantum group $U_q(\widehat{\mathfrak{gl}(1|1)})$ $\cite{Zhang1}$. By multiplying certain entries with a minus sign as explained in $\cite{Kojima}$ one gets an ungraded solution of the parametrized YBE (what we called parametrized YBE so far is the same as ungraded parametrized YBE). This is just the Perk-Schultz solution $R^{PS}_q(x)$ given by $\cite{PerkSchultz}$: 
\[
R^{PS}_q(x) =\left ( \begin{array}{cccc}
q-xq^{-1} & 0 & 0 & 0 \\
0 & 1-x & x(q-q^{-1}) & 0 \\
0 & q-q^{-1} & 1-x & 0 \\
0 & 0 & 0 & -q^{-1}+xq \end{array} \right )
\]  

Note that $R^{PS}_q(x)$ is free fermionic for any $q$ because:
\[
(q-xq^{-1})(-q^{-1}+xq) + (1-x)^2 = x(q-q^{-1})^2
\]

This means the representation theory of $\mathcal{A}_{ff}$ is related not only to the representation theory of $U_{\pm i}(\widehat{\mathfrak{sl}_2})$, but also to that of $U_q(\widehat{\mathfrak{gl}(1|1)})$ for any $q$.

The second reason why this object is worth studying is because of its relation to Whittaker functions on $p$-adic groups. It was shown $\cite{Bump1}$ $\cite{Bump2}$ that certain values of spherical Whittaker functions on GL$(r, F)$, where $F$ is a nonarchimedean local field can be written down as the partition function of a six-vertex model in the spirit of Baxter.  

The weights of such a model form an $R$-matrix $R_{\Gamma}(z)$
\[
R^{\Gamma}(z) =\left ( \begin{array}{cccc}
1 & 0 & 0 & 0 \\
0 & t & (1+t)z & 0 \\
0 & 1 & z & 0 \\
0 & 0 & 0 & z \end{array} \right )
\]  
that also satisfies the free fermionic condition. It was then shown that there is a matrix $R^{\Gamma \Gamma} (z)$ that makes possible a YBE for $R^{\Gamma}(z)$:
\[
R^{\Gamma \Gamma}_{12} (x) R^{\Gamma}_{13}(xy) R^{\Gamma}_{23}(y) = R^{\Gamma}_{23} (y) R^{\Gamma}_{13}(xy) R^{\Gamma \Gamma}_{12}(x) 
\] 

$R^{\Gamma \Gamma} (z)$ will also satisfy the property in equation $\ref{propertyparametrizedYBE}$. The interesting thing is $R^{\Gamma \Gamma} (z)$ is the $R$-matrix corresponding to the standard representation of a Drinfeld twist of $U_q(\widehat{\mathfrak{gl}(1|1)})$ $\cite{Buciumas}$. 

By understanding the Hopf algebra $\mathcal{A}_{ff}$, we can then interpret the horizontal and vertical edges in the partition function mentioned above as comodules of $\mathcal{A}_{ff}$. It would be interesting to see what the representation theory of $\mathcal{A}_{ff}$ can tell us about Whittaker functions on $p$-adic groups.

\subsection{Construction}

Let $x, y \in \Gamma$. The bialgebra $A_{ff}$ is obtained by applying the reconstruction theorem to the braided monoidal category generated by $V_x$ for $x \in \Gamma$ with braiding on the generators given by $\tau R(y x^{-1}) : V_x \otimes V_y \to V_y \otimes V_x$. $A_{ff}$ will be generated by $t_{11}(x), t_{12}(x), t_{21}(x), t_{22}(x)$ for $x \in \Gamma$ subject to the relation $R(yx^{-1}) T_1(x) T_2(y) = T_2(y) T_1(x) R(yx^{-1})$ which can be expanded as follows:

\small 
\begin{equation*}
\begin{split}
\left ( \begin{array}{cccc}
a_1(yx^{-1}) &  &  &  \\
 &  b_1(yx^{-1}) & c_1(yx^{-1}) &  \\
 &  c_2(yx^{-1}) & b_2(yx^{-1}) &  \\
 &  &  &  a_2(yx^{-1})  \end{array} \right )
\left ( \begin{array}{cccc}
t_{11}(x)t_{11}(y) & t_{21}(x)t_{11}(y) & t_{11}(x)t_{21}(y) & t_{21}(x)t_{21}(y) \\
t_{12}(x)t_{11}(y) & t_{22}(x)t_{11}(y) & t_{12}(x)t_{21}(y) & t_{22}(x)t_{21}(y) \\
t_{11}(x)t_{12}(y) & t_{21}(x)t_{12}(y) & t_{11}(x)t_{22}(y) & t_{21}(x)t_{22}(y) \\
t_{12}(x)t_{12}(y) & t_{22}(x)t_{12}(y) & t_{12}(x)t_{22}(y) &  t_{22}(x)t_{22}(y) \end{array} \right ) =  \\
\left ( \begin{array}{cccc}
t_{11}(y)t_{11}(x) & t_{11}(y)t_{21}(x) & t_{21}(y)t_{11}(x) & t_{21}(y)t_{21}(x) \\
t_{11}(y)t_{12}(x) & t_{11}(y)t_{22}(x) & t_{21}(y)t_{12}(x) & t_{21}(y)t_{22}(x) \\
t_{12}(y)t_{11}(x) & t_{12}(y)t_{21}(x) & t_{22}(y)t_{11}(x) & t_{22}(y)t_{21}(x) \\
t_{12}(y)t_{12}(x) & t_{12}(y)t_{22}(x) & t_{22}(y)t_{12}(x) &  t_{22}(y)t_{22}(x) \end{array} \right )  
\left ( \begin{array}{cccc}
a_1(yx^{-1}) &  &  &  \\
 &  b_1(yx^{-1}) & c_1(yx^{-1}) &  \\
 &  c_2(yx^{-1}) & b_2(yx^{-1}) &  \\
 &  &  &  a_2(yx^{-1})  \end{array} \right )
\end{split}
\end{equation*}

\large

For each $x \in \Gamma$, $\mathcal{A}_{ff}$ will have $V_x$ as the standard two dimensional comodule with basis $\{ v_1, v_2\}$(we will not write the dependence of $v_1, v_2$ on $x$ as long as it can be deduced from context) and coaction 

\begin{equation*}
\begin{split}
v_1 \to t_{11}(x) \otimes  v_1 + t_{12}(x) \otimes v_2 \\
v_2 \to t_{21}(x) \otimes v_1 + t_{22}(x) \otimes v_2
\end{split}
\end{equation*}

The $RTT$ relation ensures the the following map is an $\mathcal{A}_{ff}$ comodule homomorphism between $V_x \otimes V_y$ and $V_y \otimes V_x$:
\begin{equation*}
\tau R(yx^{-1}) = \left ( \begin{array}{cccc}
a_1(yx^{-1}) &  &  &  \\
 &  c_2(yx^{-1}) & b_2(yx^{-1}) &  \\
 &  b_1(yx^{-1}) & c_1(yx^{-1}) &  \\
 &  &  &  a_2(yx^{-1})  \end{array} \right )
\end{equation*}

Let $\mathcal{B}^+ := \mathcal{A}_{ff} / \mathcal{I}^+$ and $\mathcal{B}^- := \mathcal{A}_{ff} / \mathcal{I}^-$ where $\mathcal{I}^+$ and $\mathcal{I}^-$ are the ideals generated by $t_{21}(x)$, $t_{12}(x)$ respectively. Let $\mathcal{T} := \mathcal{A}_{ff} / \mathcal{I} $ where $\mathcal{I}$ is the ideal generated by both $t_{12}(x)$ and $t_{21}(x)$ for all $x \in \Gamma$.    

The following relations will hold in $\mathcal{T}$:

\small
\begin{equation*}
\begin{split}
\left ( \begin{array}{cccc}
a_1(yx^{-1}) &  &  &  \\
 &  b_1(yx^{-1}) & c_1(yx^{-1}) &  \\
 &  c_2(yx^{-1}) & b_2(yx^{-1}) &  \\
 &  &  &  a_2(yx^{-1})  \end{array} \right )
\left ( \begin{array}{cccc}
t_{11}(x)t_{11}(y) &  &  &  \\
 & t_{22}(x)t_{11}(y) &  &  \\
 &  & t_{11}(x)t_{22}(y) &  \\
 &  &  &  t_{22}(x)t_{22}(y) \end{array} \right ) =  \\
\left ( \begin{array}{cccc}
t_{11}(y)t_{11}(x) &  &  &  \\
 & t_{11}(y)t_{22}(x) &  &  \\
&  & t_{22}(y)t_{11}(x) &  \\
&  &  &  t_{22}(y)t_{22}(x) \end{array} \right )  
\left ( \begin{array}{cccc}
a_1(yx^{-1}) &  &  &  \\
 &  b_1(yx^{-1}) & c_1(yx^{-1}) &  \\
 &  c_2(yx^{-1}) & b_2(yx^{-1}) &  \\
 &  &  &  a_2(yx^{-1})  \end{array} \right )
\end{split}
\end{equation*}

\large

\subsection{Representation theory of $\mathcal{A}_{ff}$}

In this section we characterize tensor products of standard $\mathcal{A}_{ff}$ comodules. Let $z = y x^{-1}$.

\begin{mythm}
$V_x \otimes V_y$ is irreducible if and only if $\tau R(z)$ is invertible.
\end{mythm}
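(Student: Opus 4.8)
The plan is to reduce the statement to an explicit determinant computation and then analyze the subcomodules of $V_x \otimes V_y$ directly. First I would compute the determinant of $\tau R(z)$. Its central $2 \times 2$ block, acting on $\{v_1 \otimes v_2, v_2 \otimes v_1\}$, is $\begin{pmatrix} c_2(z) & b_2(z) \\ b_1(z) & c_1(z) \end{pmatrix}$, whose determinant is $c_1(z)c_2(z) - b_1(z)b_2(z)$; by the free fermionic relation \ref{propertyparametrizedYBE} this equals $a_1(z)a_2(z)$. Hence $\det \tau R(z) = a_1(z)^2\, a_2(z)^2$, so $\tau R(z)$ is invertible if and only if $a_1(z)a_2(z) \neq 0$. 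I would also record that $\tau R(z)$ is never the zero map, since $z \in \Gamma$ forces $c_1(z)c_2(z) \neq 0$.

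For the ``only if'' direction I would argue by contraposition. If $\tau R(z)$ is not invertible then, being a nonzero $\mathcal{A}_{ff}$-comodule homomorphism $V_x \otimes V_y \to V_y \otimes V_x$, its kernel is a nonzero proper subcomodule of $V_x \otimes V_y$, because the kernel of a comodule map is always a subcomodule. Thus $V_x \otimes V_y$ is reducible. One can even name the kernel: when $a_1(z) = 0$ the vector $v_1 \otimes v_1$ lies in it, and when $a_2(z) = 0$ so does $v_2 \otimes v_2$.

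For the ``if'' direction, assume $a_1(z)a_2(z) \neq 0$ and let $U$ be a nonzero subcomodule; the goal is to force $U = V_x \otimes V_y$. First I would pass to the quotient bialgebra $\mathcal{T}$: on the $\mathcal{T}$-comodule $V_x \otimes V_y$ the coaction is diagonal, with the four basis vectors carrying the group-like weights $t_{11}(x)t_{11}(y)$, $t_{11}(x)t_{22}(y)$, $t_{22}(x)t_{11}(y)$, $t_{22}(x)t_{22}(y)$. Since $U$ is a $\mathcal{T}$-subcomodule it is the direct sum of its intersections with the weight spaces, the only possible coincidence being between the two middle weights, so the weight spaces are $\langle v_1 \otimes v_1\rangle$, $\langle v_1 \otimes v_2,\, v_2 \otimes v_1\rangle$, and $\langle v_2 \otimes v_2\rangle$. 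I would then feed each candidate weight-graded subspace back into the full $\mathcal{A}_{ff}$-coaction: the off-diagonal coefficients, such as $t_{12}(x)t_{12}(y)$ in $\Delta(v_1 \otimes v_1)$, move mass between weight spaces, and the $RTT$ relations under the hypothesis $a_1(z)a_2(z) \neq 0$ prevent these coefficients from vanishing and prevent any single line inside the two-dimensional middle weight space from being coaction-closed. This forces $U$ to contain all four basis vectors.

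The main obstacle is this last step: it requires enough control over the algebra $\mathcal{A}_{ff}$ to know that the relevant coaction coefficients, and the $v_1 \otimes v_1$- and $v_2 \otimes v_2$-components of $\Delta(\alpha\, v_1 \otimes v_2 + \beta\, v_2 \otimes v_1)$, cannot simultaneously vanish. This is precisely where invertibility enters, since when $a_1(z)a_2(z) = 0$ the $RTT$ relations do permit a proper invariant subspace to survive. I expect to secure the needed non-vanishing either from a PBW-type basis for $\mathcal{A}_{ff}$ or by testing the coefficients against a sufficiently rich family of comodules (for instance through the relation to $U_{\pm i}(\widehat{\mathfrak{sl}_2})$ and $U_q(\widehat{\mathfrak{gl}(1|1)})$ noted above), most likely after splitting into sub-cases according to which of $b_1(z), b_2(z)$ vanish, since the degeneration of the central block differs in each.
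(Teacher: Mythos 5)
Your proposal is structurally the paper's proof: the same determinant computation $\det(\tau R(z)) = a_1(z)^2\, a_2(z)^2$ via the free fermionic relation \ref{propertyparametrizedYBE}, the same kernel argument for the ``only if'' direction, and the same passage to the diagonal quotient $\mathcal{T}$ for the ``if'' direction. The difference lies precisely at the step you flag as your main obstacle. You permit the two middle coweights $t_{11}(x)t_{22}(y)$ and $t_{22}(x)t_{11}(y)$ to coincide, which saddles you with a possibly two-dimensional weight space and the problem of excluding coaction-closed lines such as $c_1(z)\, v_1 \otimes v_2 - b_1(z)\, v_2 \otimes v_1$ (which genuinely is a subcomodule in the degenerate case $a_1(z) = a_2(z) = 0$). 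The paper avoids this entirely: it asserts that invertibility of $\tau R(z)$ makes all four coweights $t_{11}(x)t_{11}(y)$, $t_{11}(x)t_{22}(y)$, $t_{22}(x)t_{11}(y)$, $t_{22}(x)t_{22}(y)$ linearly independent in $\mathcal{T}$, so every $\mathcal{T}$-subcomodule is a direct sum of the four lines spanned by the standard basis vectors; then any nonzero $\mathcal{A}_{ff}$-subcomodule contains some $v_i \otimes v_j$, and the non-vanishing of the products $t_{ij}(x)t_{kl}(y)$ (again attributed to invertibility) forces it to contain all four. Neither of these algebraic facts is proved inside the theorem itself; they are the content of Lemmas \ref{lemma:linearindependence} and \ref{lemma:linearindependence2} in the following subsection, whose proofs argue that the $RTT$ ideal relates monomials only through invertible maps built from $\tau R$ tensored with identities, hence imposes no new linear relations. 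That is exactly the ``PBW-type'' control you anticipated needing, so your plan is viable; to complete it along the paper's lines, replace the case analysis on the vanishing of $b_1(z)$ and $b_2(z)$ by the single statement that the four coweights are linearly independent in $\mathcal{T}$ whenever $\tau R(z)$ is invertible.
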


\begin{proof}

$\tau R(z)$ has always rank at least one. It is a comodule map between $V_x \otimes V_y \to V_y \otimes V_x$ so if the map is not invertible it will have a kernel and therefore $V_x \otimes V_y$ will have a subcomodule. 

Note that the determinant of $\tau R(z)$ is $a_1(z)a_2(z) (c_1(z)c_2(z) - b_1(z) b_2(z)) = a_1^2(z) a_2^2(z)$. So $\tau R(z)$ is invertible if and only if $a_1(z) \neq 0$ and $a_2(z) \neq 0$. If that is the case we will show that $V_x \otimes V_y$ is irreducible.

The coaction is a map $\Delta : V_x \otimes V_y \to \mathcal{A}_{ff} \otimes V_x \otimes V_y$ which can be expanded to a coaction map $\Delta_{\mathcal{T}}  : V_x \otimes V_y \to \mathcal{T} \otimes V_x \otimes V_y$ that makes $V_x \otimes V_y$ into a $\mathcal{T}$ comodule. The formula for $\Delta_{\mathcal{T}}$ is given below:

\begin{equation*}
\begin{split}
\Delta_{\mathcal{T}} (v_1 \otimes v_1) =& t_{11}(x) t_{11}(y) \otimes v_1 \otimes v_1 \\
\Delta_{\mathcal{T}} (v_1 \otimes v_2) =& t_{11}(x) t_{22}(y) \otimes v_1 \otimes v_2  \\
\Delta_{\mathcal{T}} (v_2 \otimes v_1) =& t_{22}(x) t_{11}(y) \otimes v_2 \otimes v_1  \\
\Delta_{\mathcal{T}} (v_2 \otimes v_2) =& t_{22}(x) t_{22}(y) \otimes v_2 \otimes v_2 
\end{split}
\end{equation*}

Any nontrivial $\mathcal{A}_{ff}$ subcomodule $U \in V_x \otimes V_y$ will also be a $\mathcal{T}$ subcomodule.

Notice that as a $\mathcal{T}$ comodule $V_x \otimes V_y$ splits as a direct sum of 4 one dimensional subspaces $\text{Span}(v_1 \otimes v_1) \oplus \text{Span}(v_1 \otimes v_2) \oplus \text{Span}(v_2 \otimes v_1) \oplus \text{Span}(v_2 \otimes v_2)$. The elements $t_{11}(x) t_{11}(y), t_{11}(x) t_{22}(y),$ $ t_{22}(x) t_{11}(y), t_{22}(x) t_{22}(y)$ are linearly independent in $T_{ff}$ due to the fact that $\tau R(z)$ is invertible, so $W$ has to be a direct sum of one or several of the 4 subspaces. This means that $U$ contains at least one of the elements $v_1 \otimes v_1, ..., v_2 \otimes v_2$. If it contains at least one, the coaction on $\mathcal{A}_{ff}$ will force it to contain all four elements because $t_{ij}(x)t_{kl}(y) \neq 0$ which is due to the fact that $\tau R(z)$ is invertible. Therefore $U = V_x \otimes V_y$.

\end{proof}

We now classify the submodules and quotient modules of $V_x \otimes V_y$:

Case 1: $a_1(z) = a_2(z) = 0$.

$\tau R(z)$ has a three dimensional kernel with basis $\{ v_1 \otimes v_1,  c_1(z) v_1 \otimes v_2 - b_1(z) v_2 \otimes v_1, v_2 \otimes v_2  \}$. $\tau R(z^{-1}): V_y \otimes V_x \to V_x \otimes V_y$ has a one dimensional image with basis $\{ c_1(z) v_1 \otimes v_2 - b_1(z) v_2 \otimes v_1 \}$. Therefore $V_x \otimes V_y$ has a irreducible subcomodule with basis $\{ c_1(z) v_1 \otimes v_2 - b_1(z) v_2 \otimes v_1 \}$. It turns out it will also have a two dimensional irreducible quotient submodule with basis $\{ v_1 \otimes v_1, v_2 \otimes v_2 \}$. The three dimensional kernel of $\tau R(z)$ will split as a direct sum of the irreducible one dimensional and irreducible two dimensional. These are the only subcomodules of $V_x \otimes V_y$. The coaction will act on the one dimensional subcomodule as follows: 
\begin{equation*}
v \to \left(t_{11}(x) t_{22}(y) + \frac{b_2(z)}{c_2(z)} t_{21}(x) t_{12}(y) \right) \otimes v
\end{equation*} 

Denote the two dimensional comodule $W_{x,y}$. The coaction will act on it as follows: 
\begin{equation*}
\begin{split}
v_1 \otimes v_1 \to t_{11}(x) t_{11}(y) \otimes v_1 \otimes v_1 +   t_{12}(x) t_{12}(y) v_2 \otimes v_2 \\
v_2 \otimes v_2 \to t_{21}(x) t_{21}(y) \otimes v_1 \otimes v_1 +   t_{22}(x) t_{22}(y) v_2 \otimes v_2
\end{split}
\end{equation*}

Case 2: $a_1(z) = 0,  a_2(z) \neq 0$.

In this case $Ker(\tau R(z)) = Im( \tau R(z^{-1}))$, so $V_x \otimes V_y$ will have only one irreducible subcomodule of dimension two with basis $\{ v_1 \otimes v_1, c_1(z) v_1 \otimes v_2 - b_1(z) v_2 \otimes v_1 \}$ and one irreducible quotient comodule also of dimension two.

Case 3: $a_1(z) \neq 0,  a_2(z) = 0$.

Similar to Case 2, but now the subcomodule will have basis $\{ v_2 \otimes v_2, c_1(z) v_1 \otimes v_2 - b_1(z) v_2 \otimes v_1 \}$.

Note that for every $x,y \in \Gamma$ such that $a_1(z) = a_2(z ) = 0$, we have discovered a new two dimensional irreducible comodule $U_{x,y}$ that is not isomorphic to any of the standard comodules $V_w$. The braiding between $U_{x,y} \otimes V_w$ and $V_w \otimes U_{x,y}$ will be given by 
\small
\begin{equation*}
\left ( \begin{array}{cccc}
a_1(xw^{-1}) a_1(yw^{-1}) &  &  &  \\
 &   & b_2(xw^{-1}) b_2(yw^{-1}) &  \\
 & b_1(xw^{-1}) b_1(yw^{-1})  &  &  \\
 &  &  &  a_2(xw^{-1}) a_2(yw^{-1})  \end{array} \right )
\end{equation*}
\large
\vspace{1cm}

\subsection{An irreducibility criterion}

We now prove two lemmas that will help us in deciding which tensor products of standard comodules are irreducible. 

\begin{lemma} \label{lemma:linearindependence}
The set consisting of all $t_{i_1 i_1}(x_1) t_{i_2 i_2} (x_2) ... t_{i_n i_n}(x_n)$ for $i_k \in \{ 1, 2 \}$ is linearly independent in $\mathcal{T}$ if $\tau R(x_i x^{-1}_j)$ is invertible for all $j \leq i \in \{1, .., n \}$.
\end{lemma}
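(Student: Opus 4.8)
The plan is to work inside $\mathcal{T}$ using the defining relations obtained by setting $t_{12}(x) = t_{21}(x) = 0$ in the $RTT$ relation, and to prove independence by induction on $n$. First I would record the surviving relations by comparing the diagonal entries of $R(yx^{-1}) D_1 = D_2 R(yx^{-1})$, where $D_1, D_2$ are the two diagonal matrices in the displayed $\mathcal{T}$-relation. The corner entries show that $a_1(yx^{-1}) \neq 0$ forces $t_{11}(x)$ and $t_{11}(y)$ to commute and $a_2(yx^{-1}) \neq 0$ forces $t_{22}(x)$ and $t_{22}(y)$ to commute, while the middle $2 \times 2$ block expresses each mixed product $t_{11}(x)t_{22}(y)$, $t_{22}(x)t_{11}(y)$ in terms of the opposite-order products. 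The hypothesis that $\tau R(x_i x_j^{-1})$ is invertible for $j \le i$ is, via the determinant computation $\det \tau R(z) = a_1(z)a_2(z)\bigl(c_1(z)c_2(z) - b_1(z)b_2(z)\bigr) = a_1^2(z)a_2^2(z)$ from the previous theorem, exactly the statement that $a_1(x_i x_j^{-1}) \neq 0$ and $a_2(x_i x_j^{-1}) \neq 0$, so all of these commutation relations among the generators attached to $x_1, \dots, x_n$ are available.

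Next I would set up the induction. Writing $m_I = m_{I'}\, t_{i_n i_n}(x_n)$ with $I = (I', i_n)$ and $m_{I'}$ a fixed-order monomial in the generators attached to $x_1, \dots, x_{n-1}$, a vanishing combination $\sum_I \lambda_I m_I = 0$ regroups as $A\, t_{11}(x_n) + B\, t_{22}(x_n) = 0$, where $A$ and $B$ lie in the span of the $(n-1)$-fold monomials. The heart of the argument is to show that right multiplication by $t_{11}(x_n)$ and by $t_{22}(x_n)$ is independent over this subalgebra, i.e. that $A\, t_{11}(x_n) + B\, t_{22}(x_n) = 0$ forces $A = B = 0$; the inductive hypothesis then completes the proof.

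To establish this relative independence I would construct an explicit normal form via a Bergman diamond-lemma argument: order the generators so that smaller arguments come first and $t_{11}$ precedes $t_{22}$, orient the commutation relations above into rewriting rules (legal precisely because the relevant $a_1, a_2$ are nonzero), and check that every overlap ambiguity is resolvable, so that the ordered monomials form a basis of the relevant subalgebra. The $2^n$ monomials in the statement are already in normal form and pairwise distinct, which gives their independence. A more conceptual alternative exploits the free-fermionic structure: realize the $t_{ii}(x)$ as the diagonal part of a monodromy matrix acting on $V_{x_1} \otimes \cdots \otimes V_{x_n}$, where $m_I$ appears as the coefficient attached to $v_{i_1} \otimes \cdots \otimes v_{i_n}$ in the $\mathcal{T}$-coaction $\Delta_{\mathcal{T}}(v_{i_1} \otimes \cdots \otimes v_{i_n}) = m_I \otimes v_{i_1} \otimes \cdots \otimes v_{i_n}$, and then deduce independence of the $m_I$ from the irreducibility of such tensor products established earlier.

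The main obstacle is this relative-independence step. Because the mixed relations trade $t_{11}(x)t_{22}(y)$ for opposite-order products, one must verify that no chain of such moves collapses two distinct fixed-order monomials; equivalently, one must confirm confluence of the rewriting system. Checking that this holds under exactly the hypothesis $a_1(x_i x_j^{-1}) \neq 0$, $a_2(x_i x_j^{-1}) \neq 0$ — and isolating where the ordering constraint $j \le i$ is genuinely needed — is the delicate point, and it is here that the careful bookkeeping of the exchange relations must be carried out.
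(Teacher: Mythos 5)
Your proposal reduces everything to the step you yourself flag as unverified --- confluence of the rewriting system --- and that step is not merely delicate: it fails, and carrying out the check you postpone would have revealed this. Note first that your proposal never records which of the mixed-product relations are actually in force; their coefficients are $b_1,b_2,c_1,c_2$, not $a_1,a_2$. Reading off the $(2,2)$, $(2,3)$, $(3,2)$, $(3,3)$ entries of the displayed relation in $\mathcal{T}$, with $z=yx^{-1}$ they are $b_1(z)\bigl(t_{22}(x)t_{11}(y)-t_{11}(y)t_{22}(x)\bigr)=0$, $c_1(z)\bigl(t_{11}(x)t_{22}(y)-t_{11}(y)t_{22}(x)\bigr)=0$, $c_2(z)\bigl(t_{22}(x)t_{11}(y)-t_{22}(y)t_{11}(x)\bigr)=0$, and $b_2(z)\bigl(t_{11}(x)t_{22}(y)-t_{22}(y)t_{11}(x)\bigr)=0$. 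Now every $z\in\Gamma$ has $c_1(z)c_2(z)\neq 0$, because an element of $\Gamma$ is an invertible matrix whose determinant is $(c_1c_2)^2$; so the two $c$-relations hold unconditionally. The hypothesis of the lemma, invertibility of $\tau R$, constrains only $a_1,a_2$ and says nothing about $b_1,b_2$, and all six functions can be nonzero at once: $a_1=a_2=b_1=b_2=1$, $c_1=2$, $c_2=1$ satisfies $a_1a_2+b_1b_2=c_1c_2$ and defines an element $g\in\Gamma$ with $\tau R(g)$ invertible. Take $n=2$, $x_1$ the identity of $\Gamma$ and $x_2=g$: the $b_1$-relation gives $t_{22}(x_1)t_{11}(x_2)=t_{11}(x_2)t_{22}(x_1)$ and the $c_1$-relation gives $t_{11}(x_1)t_{22}(x_2)=t_{11}(x_2)t_{22}(x_1)$, hence $t_{22}(x_1)t_{11}(x_2)=t_{11}(x_1)t_{22}(x_2)$ in $\mathcal{T}$: two distinct fixed-order monomials coincide, even though the hypothesis holds. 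In your rewriting language, the monomial $t_{11}(x_2)t_{22}(x_1)$ is sent by two different rules to two distinct normal forms, the ambiguity is unresolvable, and resolving it by adding the induced relation is exactly the collapse; so the normal-form basis your diamond-lemma argument needs does not exist under the stated hypothesis. Independence of the $2^n$ monomials can only hold if one assumes in addition that $b_1(x_ix_j^{-1})=b_2(x_ix_j^{-1})=0$ for the relevant pairs, which invertibility of $\tau R$ does not give.

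Two further points. Your fallback route --- deducing independence of the coweights from irreducibility of tensor products ``established earlier'' --- is circular: in the paper the $n$-fold irreducibility theorem is deduced from this lemma, and even the $n=2$ theorem's proof invokes exactly the independence claim at issue; moreover, irreducibility over $\mathcal{A}_{ff}$ would not by itself imply that the $\mathcal{T}$-coweights are pairwise distinct. Finally, for comparison with the paper: its proof is the same normal-form argument you propose, compressed into the assertion that the identifications between differently-ordered monomials are ``invertible and unique.'' Uniqueness is precisely what fails --- the $b$-relations and the $c$-relations give two incompatible identifications of the same monomials --- so you have correctly isolated the point the paper glosses over, but neither your proposal nor the paper's proof establishes it, and under the hypothesis as stated it cannot be established.
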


\begin{proof}
Before dividing by the $RTT$ ideal, the set of elements of the type $\,\,\,$ $t_{i_1 i_1} (x_{\sigma (1)}) t_{i_2 i_2} (x_{\sigma (2)}) ... t_{i_n i_n}(x_{\sigma(n)})$ for all $\sigma \in S_n$ are linearly independent. Once we divide, we will have relations between elements of the type $ t_{i_1 i_1}(x_{\sigma (1)}) ... t_{i_n i_n}(x_{\sigma(n)}) $ for fixed $\sigma \in S_n$ and elements of the type $t_{i_1 i_1}(x_{\rho (1)}) t_{i_2 i_2} (x_{\rho (2)}) ... t_{i_n i_n}(x_{\rho(n)}) $ for fixed $\rho \in S_n$. The functions that map elements of the first type to elements of the second type will consist of iterations of $\tau R(x_i x_j^{-1})$ tensored with the identity, therefore it will be invertible and unique, so no new relations will actually be forced between elements of the type $t_{i_1 i_1}(x_{\sigma (1)}) t_{i_2 i_2} (x_{\sigma (2)}) ... t_{i_n i_n}(x_{\sigma(n)})$. They will thus remain linearly independent. 

\end{proof}

\begin{lemma} \label{lemma:linearindependence2}
The elements $t_{i_1 j_1} (x_1) t_{i_2 j_2}(x_2) ... t_{i_n j_n}(x_n)$ are linearly independent in $\mathcal{A}_{ff}$ if $\tau R(x_j x^{-1}_i)$ is invertible for all $j \leq i \in \{1, .., n \}$.
\end{lemma}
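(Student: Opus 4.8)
The plan is to generalize the straightening argument from the proof of Lemma \ref{lemma:linearindependence}, now allowing the off-diagonal generators $t_{12}(x)$ and $t_{21}(x)$ to appear. I would work in the free algebra $F$ on all the symbols $t_{ij}(x)$, in which the words $t_{i_1 j_1}(x_{\sigma(1)}) \cdots t_{i_n j_n}(x_{\sigma(n)})$, ranging over all $\sigma \in S_n$ and all index choices $(i_k,j_k) \in \{1,2\}^2$, are manifestly linearly independent (they are distinct words, once the $x_1,\ldots,x_n$ are taken distinct). Since $\mathcal{A}_{ff} = F/\mathcal{I}$ with $\mathcal{I}$ the two-sided $RTT$ ideal, it suffices to show that no nonzero linear combination of the standard-order words $t_{i_1 j_1}(x_1) \cdots t_{i_n j_n}(x_n)$ lies in $\mathcal{I}$.

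First I would organize $F$ in the multidegree involving $x_1,\ldots,x_n$ each exactly once: for $\sigma \in S_n$ let $W_\sigma$ be the $4^n$-dimensional span of the words with parameter order $x_{\sigma(1)},\ldots,x_{\sigma(n)}$. Read for an adjacent pair of parameters, the defining relation $R(yx^{-1}) T_1(x) T_2(y) = T_2(y) T_1(x) R(yx^{-1})$ identifies $W_\sigma$ with $W_{\sigma'}$, where $\sigma'$ differs from $\sigma$ by an adjacent transposition, via the operator built from the appropriate $R(x_b x_a^{-1})$ acting on the two relevant tensor slots and the identity elsewhere. Since $\det \tau R(z) = a_1^2(z) a_2^2(z)$, invertibility of $\tau R(z)$ is equivalent to $a_1(z) a_2(z) \neq 0$, and the group law forces the same condition to control $\tau R(z^{-1})$; hence the hypothesis that every $\tau R(x_j x_i^{-1})$ with $j \le i$ is invertible guarantees that each elementary swap used to sort the parameters into the standard order $x_1,\ldots,x_n$ is a linear isomorphism.

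The crucial point is confluence: the sorting can be carried out along many routes, and I must check that every route from a given $W_\sigma$ to $W_{\mathrm{id}}$ induces the \emph{same} linear isomorphism. Every ambiguity reduces to the two ways of reversing the order of three adjacent parameters, and the two composites agree precisely because $R$ satisfies the parametrized Yang--Baxter equation. Granting confluence, the quotient of $\bigoplus_\sigma W_\sigma$ by the relations in this multidegree is, as a vector space, isomorphic to the single summand $W_{\mathrm{id}}$, and the straightening map carries the standard-order words onto a basis of this quotient. Therefore the standard-order words $t_{i_1 j_1}(x_1) \cdots t_{i_n j_n}(x_n)$ remain linearly independent in $\mathcal{A}_{ff}$, which is the claim.

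I expect the confluence verification to be the main obstacle: one must confirm that the invertibility hypothesis $\tau R(x_j x_i^{-1})$ for all $j \le i$ supplies exactly the $R$-matrices occurring both in the elementary swaps and in the triple overlaps, and that the Yang--Baxter equation matches the two composites on the nose rather than merely up to scalar (which it does, since the $RTT$ relation uses $R$ itself and the Yang--Baxter equation is an exact identity). A cleaner but essentially equivalent packaging would invoke Bergman's diamond lemma with reduction system ``rewrite a non-standard adjacent pair of parameters using the $RTT$ relation''; the single overlap ambiguity to resolve is again the three-parameter one, settled by the Yang--Baxter equation.
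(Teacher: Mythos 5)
Your proposal is correct and takes essentially the same route as the paper: the paper's own proof of this lemma just points back to the straightening argument of Lemma \ref{lemma:linearindependence} (decompose the free algebra by parameter ordering, identify the pieces via invertible $\tau R$-swaps, and note the identification is unambiguous), which is exactly what you carry out. If anything, your write-up is more careful than the paper's, since the paper compresses the whole confluence issue --- your Yang--Baxter/diamond-lemma step --- into the single word ``unique.''
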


\begin{proof}
Based on the same idea as the previous lemma, one can show that all elements of the type $t_{i_1 j_1} (x_1) t_{i_2 j_2}(x_2) ... t_{i_n j_n}(x_n)$ are linearly independent in $\mathcal{A}_{ff}$ when all $\tau R(x_j x^{-1}_i)$ are invertible.
\end{proof}

\begin{mythm}
$V_{x_1} \otimes V_{x_2} \otimes ... \otimes V_{x_n}$ is irreducible if and only if $\tau R(x_j x^{-1}_i)$ is invertible for all $j \leq i$.
\end{mythm}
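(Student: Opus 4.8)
The plan is to prove the two implications separately, obtaining sufficiency as a direct generalization of the two-factor argument and necessity by induction on $n$.

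For sufficiency, suppose every $\tau R(x_j x_i^{-1})$ with $j\le i$ is invertible. I would pass to the quotient $\mathcal{T}$ exactly as in the $n=2$ case: as a $\mathcal{T}$-comodule the tensor product splits into $2^n$ one-dimensional subspaces $\mathrm{Span}(v_{i_1}\otimes\cdots\otimes v_{i_n})$, the coaction coefficient on each being the diagonal monomial $t_{i_1 i_1}(x_1)\cdots t_{i_n i_n}(x_n)$. By Lemma \ref{lemma:linearindependence} these monomials are linearly independent, hence pairwise distinct, nonzero, and group-like, so the $2^n$ lines are non-isomorphic simple $\mathcal{T}$-subcomodules. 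Consequently any $\mathcal{A}_{ff}$-subcomodule $U$, being in particular a $\mathcal{T}$-subcomodule, is the span of a subset of the $v_{i_1}\otimes\cdots\otimes v_{i_n}$. If $U\neq 0$ it contains some basis vector $v_{i_1}\otimes\cdots\otimes v_{i_n}$; applying the full $\mathcal{A}_{ff}$-coaction produces $\sum_{j_1,\dots,j_n} t_{i_1 j_1}(x_1)\cdots t_{i_n j_n}(x_n)\otimes v_{j_1}\otimes\cdots\otimes v_{j_n}$, whose coefficients are linearly independent, and in particular all nonzero, by Lemma \ref{lemma:linearindependence2}. The requirement that this expression lie in $\mathcal{A}_{ff}\otimes U$ then forces every $v_{j_1}\otimes\cdots\otimes v_{j_n}$ into $U$, so $U$ is the whole space and the tensor product is irreducible.

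For necessity I would first record one small symmetry: since $\det \tau R(z)=a_1(z)^2 a_2(z)^2$ and, in $\Gamma\cong GL(2,\mathbb{C})\times GL(1,\mathbb{C})$, the quantities $a_1(z^{-1})$ and $a_2(z^{-1})$ are nonzero scalar multiples of $a_2(z)$ and $a_1(z)$, the invertibility of $\tau R(z)$ is equivalent to that of $\tau R(z^{-1})$; hence the hypothesis on a pair $\{i,j\}$ is independent of the order in which $i,j$ are written, which matches the two-factor criterion (governed for $V_{x_i}\otimes V_{x_n}$ by $\tau R(x_n x_i^{-1})$) with the stated condition on $\tau R(x_i x_n^{-1})$. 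Then I induct on $n$, the case $n=2$ being the theorem already proved. Suppose some $\tau R(x_j x_i^{-1})$ with $j\le i$ fails to be invertible. If the offending pair lies inside $\{1,\dots,n-1\}$, then by induction $V_{x_1}\otimes\cdots\otimes V_{x_{n-1}}$ has a proper subcomodule $U$, and $U\otimes V_{x_n}$ is a proper subcomodule of the full product. Otherwise every pair inside $\{1,\dots,n-1\}$ has invertible braiding and the bad pair is some $\{i,n\}$; since all braidings among the first $n-1$ factors are comodule isomorphisms, composing them (tensored with identities) lets me reorder those factors freely and move $V_{x_i}$ to position $n-1$, producing a comodule isomorphism of the whole product with one in which $V_{x_i}$ and $V_{x_n}$ are adjacent. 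By the symmetry above $V_{x_i}\otimes V_{x_n}$ is reducible, so it has a proper subcomodule $U'$, and tensoring $U'$ with the remaining reordered factors and transporting back along the isomorphism exhibits a proper subcomodule of the original.

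The routine ingredients are the two applications of the linear-independence lemmas and the elementary fact that a subcomodule tensored with a comodule is again a subcomodule. The main obstacle is the necessity direction: one must guarantee that the offending pair can always be isolated as an adjacent factor, and this is precisely what the case split secures, because in the only situation where reordering is needed every interfering braiding is invertible. The $z\mapsto z^{-1}$ symmetry of invertibility of $\tau R$ is the one genuinely new input required to reconcile the two-factor reducibility criterion with the form of the statement.
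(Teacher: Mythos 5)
Your proof is correct, and it splits naturally against the paper's: the sufficiency half is essentially the paper's own argument (pass to the quotient $\mathcal{T}$, decompose the tensor product into $2^n$ one-dimensional $\mathcal{T}$-subcomodules, use Lemma \ref{lemma:linearindependence} to force any subcomodule to be spanned by standard basis vectors, then use Lemma \ref{lemma:linearindependence2} to show the coaction coefficients $t_{i_1 j_1}(x_1)\cdots t_{i_n j_n}(x_n)$ are nonzero and conclude), while the necessity half takes a genuinely different route. The paper handles necessity in one sentence: composing adjacent braidings gives an $\mathcal{A}_{ff}$-comodule homomorphism from $\cdots\otimes V_{x_j}\otimes\cdots\otimes V_{x_i}\otimes\cdots$ to the product with those factors exchanged, and since one constituent map is non-invertible the composite has a nontrivial kernel, hence a subcomodule. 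Your induction with the case split (bad pair among the first $n-1$ factors versus bad pair involving the last factor, in which case all interfering braidings are invertible) implements the same underlying idea more carefully: every map you transport along is an isomorphism, and reducibility is imported directly from the $n=2$ theorem, so you never rely on the kernel of a composite that could a priori be zero or all of the space — a point the paper glosses over, since a nontrivial kernel is only a \emph{proper} nonzero subcomodule when the composite map is itself nonzero and non-injective. Your explicit remark that $\tau R(z)$ is invertible iff $\tau R(z^{-1})$ is (because $a_1(z^{-1})$, $a_2(z^{-1})$ are nonzero multiples of $a_2(z)$, $a_1(z)$) is also a piece of bookkeeping the paper leaves implicit; it is exactly what reconciles the condition on $\tau R(x_j x_i^{-1})$ in the statement with the two-factor criterion, which is phrased for $\tau R(yx^{-1})$ acting on $V_x\otimes V_y$, and with the hypothesis of Lemma \ref{lemma:linearindependence}, which is phrased for $\tau R(x_i x_j^{-1})$. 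In short: your version buys rigor in the necessity direction at the cost of an induction; the paper's buys brevity.
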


\begin{proof}
If any of the maps $\tau R(x_j x_i^{-1})$ is not invertible, then there exists an $\mathcal{A}_{ff}$ comodule homomorphism between $... \otimes V_{x_j} \otimes ... \otimes V_{x_i} \otimes ... \to ... \otimes V_{x_i} \otimes ... \otimes V_{x_j} \otimes ...$ that will have a nontrivial kernel, therefore $V_{x_1} \otimes V_{x_2} \otimes ... \otimes V_{x_n}$ will not be irreducible.  

Suppose $W$ is a nontrivial subcomodule of $V_{x_1} \otimes V_{x_2} \otimes ... \otimes V_{x_n}$ and assume $\tau R(x_j x^{-1}_i)$ is invertible for all $j \leq i$.  $V_{x_1} \otimes .. \otimes V_{x_n}$ will split as a direct sum of one dimensional $T_{ff}$ comodules just like in the $n=2$ case. 

As a result of Lemma $\ref{lemma:linearindependence}$, all the coweights $t_{i_1 i_1}(x_1) t_{i_2 i_2} (x_2) ... t_{i_n i_n}(x_n)$ of the one dimensional comodules will be linearly independent in $\mathcal{T}$. Therefore $W$, as a $\mathcal{T}$-comodule, must split as a direct sum of some of the one dimensional subcomodules mentioned above. 

Because of this $W$ will be a direct sum of elements of the form $v_{i_1} \otimes ... \otimes v_{i_n}$. Assume $W$ is a proper subcomodule. There must be a $v_{j_1} \otimes ... \otimes v_{j_n} \notin W$ and a $v_{i_1} \otimes ... \otimes v_{i_n} \in W$. It follows that 
\[
\Delta (v_{i_1} \otimes ... \otimes v_{i_n}) = ... + t_{i_1,j_1} .. t_{i_n, j_n} v_{j_1} \otimes ... \otimes v_{j_n}+...
\]
where $t_{i_1,j_1} .. t_{i_n, j_n} v_{j_1}  \neq 0$. This gives us a contradiction. 

\end{proof}

Based on the dimension of all subcomodules of $V_x \otimes V_y$ and $V_x \otimes V_y \otimes V_z$, and also from the representation theory of finite dimensional modules of $U_{\pm i}(\widehat{\mathfrak{sl}_2})$ and $U_q(\widehat{\mathfrak{gl}(1|1)}$, we formulate the following conjecture:

\begin{mycon}
All finite dimensional comodules of $\mathcal{A}_{ff}$ will have dimension a power of two. 
\end{mycon}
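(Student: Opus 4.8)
The plan is to control every finite dimensional comodule through its indecomposable constituents, and to bound the dimensions of those constituents using the coweight decomposition supplied by the quotient bialgebra $\mathcal{T}$. First I would invoke the Krull--Schmidt property for the category of finite dimensional $\mathcal{A}_{ff}$-comodules: each such comodule is a direct sum of indecomposables, unique up to reordering and isomorphism, so the dimensions that can occur are determined by those of the indecomposable summands. Accordingly the work reduces to showing that every \emph{indecomposable} finite dimensional comodule has dimension a power of two. This is strictly stronger than an irreducibility statement: the case analysis of $V_x\otimes V_y$ already exhibits indecomposables of dimension $1$, $2$ and $4$ (in Case 1 the full four dimensional tensor product is indecomposable of composition length three, with factors of dimension $1$, $1$, $2$), and all of these are powers of two.

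Next I would show that every indecomposable comodule is a subquotient of a single tensor product $V_{x_1}\otimes\cdots\otimes V_{x_n}$ of standard comodules. Since $\mathcal{A}_{ff}$ is produced by the reconstruction theorem applied to the monoidal category tensor-generated by the $V_x$, every object is a subquotient of a finite direct sum of such tensor products; extracting an indecomposable summand, it must be a subquotient of one of them. It therefore suffices to analyse the subquotient structure of $V_{x_1}\otimes\cdots\otimes V_{x_n}$.

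Here the $\mathcal{T}$-comodule structure is the main combinatorial tool. Restricting the coaction along $\mathcal{A}_{ff}\to\mathcal{T}$ splits $V_{x_1}\otimes\cdots\otimes V_{x_n}$ into $2^n$ one dimensional coweight lines $v_{i_1}\otimes\cdots\otimes v_{i_n}$ carrying coweights $t_{i_1 i_1}(x_1)\cdots t_{i_n i_n}(x_n)$. By Lemma \ref{lemma:linearindependence} and Lemma \ref{lemma:linearindependence2}, when all $\tau R(x_j x_i^{-1})$ are invertible these coweights are linearly independent and the tensor product is irreducible of dimension $2^n$. The degenerate cases are governed entirely by the free fermionic identity, which gives $\det\tau R(z)=a_1^2(z)a_2^2(z)$, so the only degenerations are the adjacent rank drops classified in Cases 1--3 for $n=2$. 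The claim to establish is that in every degeneration the $\mathcal{A}_{ff}$-coaction glues the $2^n$ coweight lines into indecomposable blocks each occupying a power-of-two number of lines. I would prove this by induction on $n$: using the braiding isomorphisms $\tau R$ to move a degenerate adjacent pair into position, replace that pair by its indecomposable constituents ($V$, $U_{x,y}$, or $W_{x,y}$, of dimension $2$), and apply the inductive hypothesis to the shortened word. As a consistency check I would compare the output with the two dictionaries available at the free fermionic point: the standard comodules match two dimensional evaluation modules of $U_{\pm i}(\widehat{\mathfrak{sl}_2})$ and, via the Perk--Schultz identification, standard modules of $U_q(\widehat{\mathfrak{gl}(1|1)})$, whose tensor-power indecomposables are of power-of-two dimension.

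The hard part will be the inductive gluing in the multiply degenerate regime. Because the comodule category is not semisimple, one cannot simply sum composition factors to read off indecomposable dimensions, and one must control the nonsplit extensions produced when several $\tau R(x_j x_i^{-1})$ drop rank simultaneously. The cleanest route I foresee is to compute the endomorphism algebra of $V_{x_1}\otimes\cdots\otimes V_{x_n}$ out of the iterated maps $\tau R$ and show that its block structure forces every primitive idempotent, hence every indecomposable summand, to span a power-of-two set of coweight lines; ruling out any ``exotic'' indecomposable built from, say, three coweight lines is the crux on which the whole argument turns.
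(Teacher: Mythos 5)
You should be aware at the outset that the statement you are proving is labelled a \emph{conjecture} in the paper: the author offers no proof, only evidence from the subcomodule analysis of $V_x\otimes V_y$ and $V_x\otimes V_y\otimes V_z$ and the analogy with $U_{\pm i}(\widehat{\mathfrak{sl}_2})$ and $U_q(\widehat{\mathfrak{gl}(1|1)})$. So there is no paper proof to match, and your text must stand on its own as a proof --- which it does not, since you yourself defer the multiply degenerate regime as ``the crux on which the whole argument turns.'' A proposal whose final paragraph identifies its central claim as unestablished is an outline, not a proof. Note also that the conjecture as literally stated must be read as concerning \emph{irreducible} comodules (otherwise $k\oplus V_x$ is a three dimensional counterexample), so your Krull--Schmidt reformulation in terms of indecomposables is a strictly stronger statement than what the paper asserts, and it too would require justification rather than being a safe reduction.

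Beyond that, two specific steps break. First, the reduction ``every object is a subquotient of a finite direct sum of tensor products $T_w=V_{x_1}\otimes\cdots\otimes V_{x_n}$; extracting an indecomposable summand, it must be a subquotient of one of them'' is invalid in a nonsemisimple category: only \emph{simple} subquotients of $\bigoplus_i T_{w_i}$ are forced to be subquotients of a single $T_{w_i}$ (via composition factors). For indecomposables this fails already in toy examples --- over $k[x,y]/(x,y)^2$, with $M_1$ the two dimensional module on which $x$ acts nilpotently and $y=0$, and $M_2$ its mirror, the submodule of $M_1\oplus M_2$ generated by $(v_1,v_2)$ is a three dimensional indecomposable that is a subquotient of neither summand. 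Since the comodule category of $\mathcal{A}_{ff}$ is exactly of this nonsemisimple type (Case 1 exhibits a nonsplit four dimensional indecomposable), your whole reduction to single tensor words is unfounded. Second, the inductive gluing is not available as described: when several $\tau R(x_j x_i^{-1})$ drop rank simultaneously, the crossings you would use to move a degenerate pair into adjacent position may themselves be non-invertible, so the reordering isomorphisms do not exist; and ``replacing the pair by its indecomposable constituents'' is not an operation on the ambient tensor product precisely because in Case 1 the pair $V_x\otimes V_y$ is indecomposable --- there is no internal decomposition to substitute, only a filtration, and filtrations do not pass through tensor factors without controlling the nonsplit extensions you acknowledge. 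Finally, your fallback of computing $\mathrm{End}(V_{x_1}\otimes\cdots\otimes V_{x_n})$ and its primitive idempotents detects only direct \emph{summands}, whereas the comodules the conjecture is about (e.g.\ the irreducible two dimensional quotient in Case 1) arise as subquotients that are not summands; so even a complete block analysis of the endomorphism algebra would not bound the dimensions of all irreducible comodules. The proposal is thus a reasonable heuristic consistent with the paper's evidence, but it contains no mechanism that closes the conjecture.
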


We end this article with a few questions that might be suitable for further research and a thought on possible applications of this work.

Notice that $A_{ff}$ is not a Hopf algebra because it doesn't have an antipode, so it is natural to ask what relations to add in order to make $A_{ff}$ into a Hopf algebra. The quantum determinant  from the $\widehat{\mathfrak{sl}}_2$ case doesn't have a straightforward generalization to this case. One can set $\left(t_{11}(x) t_{22}(y) + \frac{b_2(z)}{c_2(z)} t_{21}(x) t_{12}(y) \right)$ equal to $1$ for when $a_1(z) = a_2(z)=0$. This would make a set of one dimensional comodules be isomorphic to the trivial comodule, but it would not make all of them. It would also not uniquely identify the antipode, since for a given $x$ there are many $y$'s such that $a_1(yx^{-1})=a_2(yx^{-1})=0$. 

$A_{ff}$ might have an interesting finite dimensional comodules, but what it does not have is infinite dimensional ones. Infinite dimensional representations of affine Lie algebras are very important, for example see the significance of the basic representation in theoretical physics $\cite{FrenkelKac}$. It would be interesting to build the ``dual'' of $\mathcal{A}_{ff}$, namely the object whose relation to $\mathcal{A}_{ff}$ is the same as the relation of $U_q(\widehat{\mathfrak{sl}_2})$ with $\widehat{SL_q(2)}$. One should then try to study the infinite dimensional representations of such an object.

\vspace{1cm}
\small
 
\bibliography{bib}
\bibliographystyle{amsalpha}

\vspace{1cm}

\large
\textsc{The Max Planck Institute for Mathematics, Bonn, 53111, Germany}

\textit{Email address:} \href{mailto:valentin.buciumas@gmail.com}{\nolinkurl{valentin.buciumas@gmail.com}}

\end{document}